\def\R{\mathbb{R}}
\def\Z{\mathbb{Z}}
\def\Q{\mathbb{Q}}
\def\P{\mathbb{P}}
\def\rep{\text{representation}}
\def\gmd{\text{graph manifold}}
\def\PSL2R{\text{PSL}(2,\mathbb{R})}
\def\TSL2R{\widetilde{\text{SL}}(2,\mathbb{R})}
\def\MG{\widetilde{\text{SL}}(2,\mathbb{R}) \times_{\mathbb{Z}} \mathbb{R}}
\newenvironment{ack}{{\noindent\textbf{Acknowledgement.}}\ }
\newtheorem{theorem}{Theorem}[section]
\newtheorem{lemma}[theorem]{Lemma}
\newtheorem{definition}[theorem]{Definition}
\newtheorem{remark}[theorem]{Remark}
\newtheorem{proposition}[theorem]{Proposition}
\newtheorem{corollary}[theorem]{Corollary}
\title{Virtual representations for closed graph manifolds and \\ Seifert geometry} 
\author{YAO FAN}
\date{}
\begin{document}

    \maketitle

    \begin{spacing}{1.5}

        \begin{abstract}

            In this paper, we mainly discuss the $\rep$s of closed $\gmd$s to the Seifert motion group. 
            Then we prove that there exist $\gmd$s virtually having no faithful $\rep$s to the Seifert motion group. 
               
        \end{abstract}

        \section{Introduction}
            In Kirby's list of problems in low-dimensional topology \cite{Kir97}, 
            Thurston proposed a question if every finitely generated 3-manifold group $G$ has a faithful representation in $\text{GL}(4,\R)$. 
            However, Button gave a counterexample \cite{But14} which is a graph manifold made by gluing two Seifert manifolds. 
            The fundamental group of the graph manifold cannot be embedded in $\text{GL}(n,K)$ for $n\leq 4$ and for any field $K$. 
            
            Then we can ask if every finitely generated 3-manifold group $G$ has a faithful linear representation. 
            According to the book \cite{AscFW15} written by Aschenbrenner, Friedl and Wilton, which gives a detailed survey of 3-manifold groups,  
            the isometry group of the following geometric 3-manifold are subgroups of $\text{GL}(4,\R)$: 
            spherical geometry, $S^2\times \R$, Euclidean geometry, Nil, Sol and hyperbolic geometry . 
            Moreover, the fundamental group of an $\mathbb{H}^2\times \R$-manifold is a subgroup of $\text{GL}(5,\R)$ \cite[p.50]{AscFW15}. 
            Hence, these seven geometric manifolds have faithful linear $\rep$s. 
            However, the isometry group of $\TSL2R$ is not a linear group. 
            With the fact that the fundamental group of an $S^1$ bundle over a surface is linear over $\Z$ \cite[p.43]{AscFW15}, 
            Seifert geometry, whose model space is $\TSL2R$, virtually has a faithful linear $\rep$. 
            However, we cannot fix the size of the linear Lie group. 

            As for $\gmd$s, which consists of Seifert manifolds, the bounded case has better properties. 
            Result of Haglund and Wise \cite{HagW08}\cite{HagW12} shows that 
            if a 3-manifold admits a nonpositive curvature, then its fundamental group can be embedded in a linear Lie group. 
            Kapovich and Leeb proved that there exists a nonpositive curvature on a $\gmd$ with torus boundaries \cite{KapL96}. 
            Hence, a $\gmd$ with torus boundaries has a faithful linear $\rep$.  
            However, Kapovich and Leeb also gave an example of a closed $\gmd$ admitting no nonpositive curvature. 
            So, for closed $\gmd$s, we do not know if the fundamental groups have faithful linear $\rep$s. 

            Brooks and Goldman defined the Seifert volume by using a $\rep$ to the isometry group of $\TSL2R$ for 3-manifolds \cite{BroG84}. 
            Therefore, we consider the $\rep$s of closed $\gmd$ to the Seifert motion group, 
            which is the identity component of the isometry group of $\TSL2R$. 
            We prove a theorem as the following:
            \begin{theorem}\label{NFR}
                There exists a closed $\gmd$ virtually having no faithful $\rep$s to Seifert motion group. 
                In fact, there exists a closed graph manifold $M$ such that for any finite cover $M'$ of $M$ and 
                any representation $\rho: \pi_1(M') \to \MG$, $\rho$ is not faithful restricted to any Seifert block of $M'$. 
            \end{theorem}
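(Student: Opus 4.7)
The approach is to exhibit an explicit closed $\gmd$ $M$ whose every Seifert piece has a hyperbolic base 2-orbifold with boundary, and whose JSJ gluing matrices have nonzero off-diagonal entries (so that on each JSJ torus the two adjacent fibers are transverse, as required for a proper graph manifold). These properties are preserved by finite covers: each Seifert block $N'_\alpha$ of a finite cover $M'$ covers some piece of $M$, hence has hyperbolic base orbifold $\Sigma'_\alpha$ with boundary, and the gluing matrices on the JSJ tori of $M'$ retain nonzero off-diagonal entries.

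The central ingredient is a rigidity statement for representations $\rho: \pi_1(N'_\alpha) \to \MG$. The group $G = \MG$ has center $Z(G) \cong \R$, with $G/Z(G) \cong \PSL2R$, and the centralizer in $G$ of any non-central element is virtually abelian (it is the preimage of a maximal torus of $\PSL2R$). Since the regular fiber $h_\alpha$ generates $Z(\pi_1(N'_\alpha))$ and $\pi_1(N'_\alpha)$ is non-solvable (as $\Sigma'_\alpha$ is hyperbolic), if $\rho|_{\pi_1(N'_\alpha)}$ were faithful then $\rho(h_\alpha)$ would have to lie in $Z(G)$; otherwise $\rho(\pi_1(N'_\alpha))$ would be contained in a virtually abelian centralizer. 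Composing with $G \to \PSL2R$ then produces a faithful representation $\overline{\rho}_\alpha: \pi_1^{orb}(\Sigma'_\alpha) \to \PSL2R$ under which every peripheral curve of $\Sigma'_\alpha$ has infinite order, since peripheral loops in a hyperbolic orbifold group are infinite order.

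The graph manifold gluing then forces a contradiction. On a JSJ torus $T$ shared with an adjacent block $N'_\beta$, write $h_\beta = a h_\alpha + b \sigma_\alpha$ in $\pi_1(T) \cong \Z^2$ with $b \neq 0$, where $\sigma_\alpha$ is a section curve of $N'_\alpha$ on $T$. Since $\rho(h_\alpha) \in Z(G)$, projection to $\PSL2R$ gives $\rho'(h_\beta) = \overline{\rho}_\alpha(\sigma_\alpha)^b$, which has infinite order. Thus $\rho(h_\beta) \notin Z(G)$, so $\rho(\pi_1(N'_\beta))$ is contained in the virtually abelian centralizer of $\rho(h_\beta)$ and is therefore solvable, contradicting faithfulness of $\rho|_{\pi_1(N'_\beta)}$ since $\pi_1(N'_\beta)$ is non-solvable. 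Consequently the set of Seifert blocks of $M'$ on which $\rho$ is faithful forms an independent set in the JSJ graph of $M'$.

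The main obstacle, and where the specific construction of $M$ must do additional work, is to rule out an \emph{isolated} faithful block whose neighbors all have solvable $\rho$-image: the argument above does not yet preclude this possibility. I would address this by choosing the JSJ graph and gluing matrices of $M$ so that, in every finite cover, each Seifert block $N'_\alpha$ is adjacent (or self-adjacent) to sufficiently many blocks via tori with distinct slopes that the various peripheral section images $\overline{\rho}_\alpha(\sigma_\alpha^{(k)}) \in \PSL2R$ must simultaneously satisfy the infinite-order condition coming from faithfulness of $\overline{\rho}_\alpha$ and a torsion condition propagated from the solvable-image condition on a neighboring block, yielding an outright contradiction. Carrying out this propagation and verifying that the incompatible constraints survive the passage to every finite cover is the key technical step.
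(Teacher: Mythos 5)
Your first half reproduces the paper's opening moves: a faithful representation on a block forces the fiber into the center of $\MG$ (the paper's Proposition \ref{IFR}), the gluing with $b\neq 0$ then makes the neighboring fibers noncentral of projectively infinite order (Corollary \ref{ABF}), and since the centralizer of a noncentral element is abelian (Proposition \ref{CP}) every neighboring block has abelian image. But you stop exactly where the actual work begins, and you say so yourself: ruling out an isolated faithful block with abelian-image neighbors is the whole theorem, and your sketch of how to do it (``distinct slopes,'' a ``torsion condition propagated from the solvable-image condition'') is not an argument. In particular, solvability of a neighbor's image does not by itself produce any torsion constraint, so the mechanism you gesture at is not the right one.

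The missing idea is homological, not group-theoretic. Frame each Seifert block by the fibers of its neighbors; Lemma \ref{FR} then gives, inside $H_1$ of each block $M_v$ of a \emph{maximal} noncentral abelian component $M_0$, the relation $b_vk_vf_v-\sum_{w\in W_A}\frac{b_v}{b_{w,v}}f_w=\sum_{w'\in W_C}\frac{b_v}{b_{w',v}}f_{w'}$, where the right-hand fibers are central by maximality (Proposition \ref{ABC}). Assembling these over $M_0$ yields an integer matrix equation $\bm{e}_0\bm{f_A}=\bm{f_C}$ with central right-hand side. The explicit condition the paper imposes on $M$ --- strict diagonal dominance, $|k_v|>\sum_w 1/|b_{v,w}|$ for every block --- makes $\bm{e}_0$ strictly diagonally dominant, hence invertible, so $\det(\bm{e}_0)\,f_i$ is central and every fiber in $M_0$ is projectively torsion, contradicting the projectively infinite order forced by the faithful neighbor. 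Two further covering lemmas are needed and are not addressed in your proposal: the condition must be shown stable under (JSJ characteristic) finite covers (Proposition \ref{SDD}), and one must pass to a double cover making the abelian subgraph an induced subgraph so the matrix equation is well formed (Proposition \ref{ISG}). Without these ingredients your proposal is a correct setup but not a proof.
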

            Here $\MG$ denotes the Seifert motion group.
            In addition, the theorem \ref{NFR} holds for closed $\gmd$s satisfying the condition of strictly diagonally dominance (Definition \ref{DSB}).  
            This condition is inspired from Derbez, Liu and Wang's paper \cite{DerLW20}. 
            If the $\gmd$ has nonempty torus boundaries, it can have a $\rep$ restricted on one submanifold is faithful. 
            We give an example of the $\gmd$ and construct the $\rep$ in Section \ref{EB}. 

            To get the theorem \ref{NFR} we prove a stronger version that there exist no virtually vertex faithful $\rep$s (Definition \ref{Rep}). 
            We suppose there is virtually a vertex faithful $\rep$ on a closed $\gmd$ satisfying the condition of strictly diagonally dominance. 
            Then the image of a submanifold under the $\rep$ is an Abelian group. 
            We list equations about the images of fibers which are deduced form the topological structure of the $\gmd$. 
            The solution of the equations contradicts with the assumption of the vertex faithful $\rep$. 
            So, we deduce the result that this $\gmd$ virtually has no faithful $\rep$s. 

            The paper is organized as the following: 

            In section \ref{Pre} we recall the relative definition of $\gmd$s and introduce two important topological invariants. 
            In section \ref{MG} we introduce the Seifert motion group and classify its elements in order to discuss $\rep$s. 
            Then in section \ref{FinC} we introduce JSJ characteristic finite covers 
            and give a property about topological invariants under this special finite cover. 
            In section \ref{Rep} we mainly discuss the representations to the Seifert motion group.  
            In section \ref{Proof} we prove the Theorem \ref{NFR} about the nonexistence of $\rep$ in the closed condition. 
            In addition, we construct a vertex faithful $\rep$ when a $\gmd$ has torus boundaries in section \ref{EB}. 

        \ 
        
        \begin{ack}
            I would like to give my thanks to my supervisor Yi Liu for his guidance and conversations. 
        \end{ack}

        \section{Preliminary}\label{Pre}
    
            Let $M$ be an orientable compact irreducible 3-manifold. 
            There is a collection of disjoint incompressible tori splits $M$ into components which are either atoroidal or Seifert fibered. 
            Moreover, the minimal such collection is unique up to isotopy. 
            This decomposition of 3-manifold is called Jaco-Shalen-Johannson torus decomposition (JSJ decomposition) \cite{JacS79}\cite{Joh79}. 

            In the following we review the definitions, related properties and some topological invariants of Seifert manifolds and $\gmd$s. 
            Definitions and notations mostly follows the paper of Buyalo and Svetlov \cite{BuyS04}. 

            \subsection{Seifert manifolds}

                A Seifert manifold is a 3-manifold foliated by circles. 
                A useful definition according to Scott \cite{Sco83} is to define the Seifert manifold as a ``circle bundle'' over an orbifold. 
                Let $M$ be a Seifert manifold and $O$ be its base orbifold. There is a projection $p: M \to O $ and an exact sequence of fundamental groups 
                \[ \{1\} \to K \to \pi_1(M) \to \pi_1(O) \to \{1\} , \]
                where $K$ is a cyclic subgroup of $\pi_1(M)$ generated by a regular fiber. 
                The group $K$ is infinite cyclic except the case where $M$ is covered by $S^3$. 

                Let the orbifold $O$ has an underlying surface $F$ and $m$ singular points $s_i$ with degree $q_i$, $1\leq i\leq m$. 
                Then the Euler characteristic number of an orbifold is defined to be 
                \[ \chi(O) = \chi(F) - \sum_{i=1}^{m} \left( 1 - \frac{1}{q_i}\right). \]
                If $\chi(O) < 0 $, then there is a finite cover of $M$ having a hyperbolic structure on its base surface.  
                The fundamental group of the base surface can be embedded into $\PSL2R$. 
                In the following we only consider the Seifert manifold having a base orbifold with a negative Euler characteristic number. 
                Hence, there is a finite cover of the Seifert manifold is a trivial circle bundle over a surface. 
                In the rest of this paper, the Seifert manifold is a trivial circle bundle without special mentions. 

                \ 

                \noindent\textbf{Waldhausen basis. }
                Let $M$ be an oriented Seifert manifold with nonempty boundaries.  
                Let $W$ be the set of boundary components. 
                Consider an oriented fiber $f$ of $M$ and 
                there is a homological class $f_w \in H_1(T_w;\Q)$ corresponding to fiber $f$ in each boundary torus $T_w,w\in W$. 
                Then there is an induced orientation from $M$ to $T_w$. 
                Hence, the intersection form $ (\cdot,\cdot): H_1(T_w;\Q) \times H_1(T_w;\Q) \to \Q$ can be defined. 
                For each $f_w$ there is a $z_w\in H_1(T_w;\Q)$ satisfying $(z_w,f_w)= 1$. 
                Here we modify the notation of the intersection form which is different from the notation of Buyalo and Svetlov. 

                \begin{definition}
                    For each torus boundary $T_w$ of a Seifert manifold $M$ there is a $z_w$ such that $(z_w, f_w) = 1$. 
                    Then a collection of elements $\{z_w,f_w\}_{w\in W}$ is called a Waldhausen basis of the $M_v$
                    if they satisfy the condition that $z =\sum_{w\in W} z_w $ lies in the kernel of the inclusion $i_*: H_1(\partial M;\Q) \to H_1(M;\Q)$.
                \end{definition}

                The Waldhausen basis is not unique. It is defined up to a transformation $z_w \mapsto z_w + n_w f_w$ where $\sum_{w\in W} n_w = 0$. 
                If $M$ is a trivial circle bundle, then we can choose $z_w$ from $H_1(T_w;\Z)$. 
                This choice of Waldhausen basis is equivalent to the choice of a trivialization of $M = F \times S^1$. 

                \ 

                \noindent\textbf{Framed Seifert manifolds. }
                Let $M$ be a Seifert manifold and each boundary $T_w$ has a fixed element $c_w \in H_1(T_w;\Z)$ with $(c_w,f_w)\neq 0$. 
                Then $M$ is called framed and the collection $\{ c_w\}_{w\in W}$ is a framing of $M$. 

                For a framed Seifert manifold, there is a formula about its fiber and the framing. 
                We state the formula as a lemma. 

                \begin{lemma}\label{FR}
                    Let $\{ c_w \}_{w\in W}$ be a framing of a Seifert manifold $M$. Then 
                    \begin{equation}\label{REC}
                        \sum_{w\in W} \frac{1}{(c_w, f_w)} \cdot (i_w)_* c_w = k \cdot f,  
                    \end{equation}
                    where $(i_w)_*: H_1(T_w;\Z) \to H_1(M;\Z)$ is the inclusion homomorphism induced by $i_w: T_w \to M_v$.  
                    The homological class $f \in H_1(M;\Z)$ represents the regular fiber, and $(i_w)_*f_w = f$. 
                    The number $k$ on the right hand is a rational number. 
                \end{lemma}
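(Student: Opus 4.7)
The plan is to express the framing in terms of a Waldhausen basis and exploit the defining property of that basis, namely that the sum $\sum_{w\in W} z_w$ vanishes in $H_1(M;\Q)$.

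First I would fix a Waldhausen basis $\{z_w, f_w\}_{w\in W}$ for $M$, so each $H_1(T_w;\Q)$ is spanned by $z_w$ and $f_w$ with $(z_w, f_w) = 1$. For each boundary torus, write the framing element in coordinates:
\[
c_w \;=\; a_w z_w + b_w f_w, \qquad a_w, b_w \in \Q.
\]
Because the intersection form on $H_1(T_w;\Q)$ is antisymmetric, $(f_w, f_w) = 0$ and $(z_w, f_w) = 1$, so pairing $c_w$ with $f_w$ gives $(c_w, f_w) = a_w$. In particular the hypothesis $(c_w, f_w) \neq 0$ says exactly that $a_w \neq 0$, so division below is legal.

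Next I would divide by $(c_w, f_w) = a_w$ to get
\[
\frac{1}{(c_w, f_w)} \, c_w \;=\; z_w + \frac{b_w}{a_w} f_w \qquad \text{in } H_1(T_w;\Q),
\]
push forward under $(i_w)_*$, and sum over $w \in W$:
\[
\sum_{w\in W} \frac{1}{(c_w, f_w)} \, (i_w)_* c_w \;=\; \sum_{w\in W} (i_w)_* z_w \;+\; \left(\sum_{w\in W} \frac{b_w}{a_w}\right)\!\sum_{w\in W}\frac{(i_w)_* f_w}{|W|}\cdot|W|.
\]
The two key simplifications are: (i) by the defining property of a Waldhausen basis, $\sum_{w\in W} z_w$ lies in $\ker(i_*: H_1(\partial M;\Q) \to H_1(M;\Q))$, so the first sum vanishes in $H_1(M;\Q)$; and (ii) each $f_w$ is the boundary trace of the common regular fiber class, hence $(i_w)_* f_w = f$ for every $w$. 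Setting $k = \sum_{w\in W} b_w / a_w \in \Q$ then yields the claimed identity.

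There is no serious obstacle here; the only subtlety is bookkeeping the rational coefficients, since the $c_w$ are integral but the Waldhausen basis coordinates and the resulting $k$ need only be rational. That is also why the identity is stated in $H_1(M;\Q)$ rather than $H_1(M;\Z)$ and why $k$ is allowed to be any rational number.
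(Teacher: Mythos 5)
Your argument is the same as the paper's: expand $c_w$ in a Waldhausen basis, identify $(c_w,f_w)$ with the coefficient of $z_w$, divide, and use that $\sum_w z_w$ dies in $H_1(M;\Q)$ while every $(i_w)_*f_w=f$. One caution: your middle display factors the second term as $\bigl(\sum_w b_w/a_w\bigr)\sum_w \tfrac{(i_w)_* f_w}{|W|}\cdot|W|$, which as written introduces a spurious factor of $|W|$; the correct step is simply $\sum_w \tfrac{b_w}{a_w}(i_w)_* f_w = \bigl(\sum_w \tfrac{b_w}{a_w}\bigr)f$ since each summand's image is $f$, and with that repaired the proof matches the paper's.
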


                \begin{proof}
                    We choose a Waldhausen basis $\{z_w,f_w\}_{w\in W}$ for $M$. 
                    $c_w$ can be represented as $c_w = a_w f_w + b_w z_w$. 
                    Since $b_w = (c_w,f_w) \neq 0$, 
                    \[
                        \sum_{w\in W} \frac{1}{(c_w,f_w)} \cdot (i_w)_* c_w = 
                        \sum_{w\in W} \frac{1}{b_w} \cdot (i_w)_* (a_w f_w + b_w z_w) = 
                        \left( \sum_{w\in W} \frac{a_w}{b_w}  \right)f.
                    \]
                    Here the coefficient $k = \sum_{w\in W} \frac{a_w}{b_w}$ on the right hand is independent of the Waldhausen basis 
                    since the left hand is independent of the choice. 
                \end{proof}
            
            \subsection{Graph manifolds}

                A graph manifold $M$ is an irreducible 3-manifold with all components Seifert manifolds under the JSJ decomposition. 
                The $\gmd$ $M$ is associated with a directed graph $\Gamma(V,E) $ dual to the JSJ decomposition. 
                We call each component a Seifert block.  
                The vertex set $V$ of $\Gamma$ is the set of Seifert blocks and the set $E$ of oriented edges of $\Gamma$ is identified to boundaries of all blocks. 
                If $e\in E$ is an edge directed form $v_1$ to $v_2$, then $T_{v_1,v_2} \subset \partial M_{v_1} $ and $T_{v_2,v_1} \subset \partial M_{v_2} $. 
                Hence, $e$ is identified to an ordered pair $(v_1,v_2)$. 

                If a graph $\Gamma$ is connected, for two points $v_1,v_2$, 
                Let $\P$ denotes the set of all paths $L$ connecting $v_1,v_2$. 
                We define the length $\# L$ to be the number of edges in $L$.  
                The metric on $\Gamma$ is defined as 
                \[ d(v_1,v_2) = \min_{L\in \P} \# L. \]
                Then we can define the distance between two Seifert blocks by their corresponding vertices as 
                \[ d(M_{v_1},M_{v_2}) = d(v_1,v_2). \]

                We fix an orientation of a graph manifold $M$. 
                Then a torus boundary of a Seifert block has an induced orientation. 
                Let $M_{v_1},M_{v_2}$ be adjacent Seifert blocks. 
                We denote the torus by $T_{v_1,v_2}$ with the induced orientation from $M_{v_1}$ dual to the directed edge $e=(v_1,v_2)$. 
                Then the orientation of $T_{v_2,v_1}$ is opposite to the orientation of $T_{v_1,v_2}$ 
                
                A gluing map is a homeomorphism associated with a directed edge $e=(v_1,v_2)$ maps $T_{v_1,v_2}$ to $T_{v_2,v_1}$. 
                If we choose $\{ f_{v_1}, z_{v_1} \}$ as a basis of $\pi_1(T_{v_1,v_2})$ and $\{ f_{v_2}, z_{v_2} \}$ as a basis of $\pi_1(T_{v_2,v_1})$, 
                then the gluing map induces an isomorphism on the fundamental groups. It can be represented as a 2$\times$2 matrix 
                \[
                    g_{v_1,v_2} = 
                    \begin{pmatrix}
                        a & b \\
                        c & d
                    \end{pmatrix}, 
                    \quad ad-bc=-1. 
                \]
                So we call the isomorphism $g_{v_1,v_2}$ a gluing matrix.

                In the following we introduce two topology invariants of graph manifolds referred to Buyalo and Svetlov \cite{BuyS04}. 
                We make a few modifications by using the gluing matrix in definitions. 

                \ 
                
                \noindent\textbf{The intersection index. }
                Let $M_{v_1}$ and $M_{v_2}$ be two adjacent Seifert blocks in an oriented $\gmd$. 
                We use $f_{v_1,v_2}$ to denote the homological class of the regular fiber of $M_{v_1}$ in $H_1(T_{v_1,v_2};\Z)$ 
                and $f_{v_2,v_1}$ to denote the fiber of $M_{v_2}$ in $H_1(T_{v_2,v_1};\Z)$. 
                By the gluing matrix $g_{v_1,v_2}$, the fiber $f_{v_1,v_2}$ maps to an element in $H_1(T_{v_2,v_1};\Z)$.  
                
                Define 
                \[ b_{v_2,v_1} = (f_{v_2,v_1}, g_{v_1,v_2}(f_{v_1,v_2})).  \]
                By definition of a $\gmd$ and its induced orientation, the integer number $b_{v_2,v_1}$ satisfies $b_{v_2,v_1} = b_{v_1,v_2} \neq 0 $. 
                This integer $b_{v_2,v_1}$ is called the intersection index of fibers $f_{v_2,v_1}$ and $f_{v_1,v_2}$. 
                It changes the sign when the orientation of one fiber $f_{v_2,v_1}$ or $f_{v_1,v_2}$ changes.  

                \ 

                \noindent\textbf{The charge. }
                For a Seifert block $M_v$, it has a natural framing $\{c_w\}_{w\in W}$ by setting $c_w = g_{w,v}(f_{w,v})$. 
                Then the coefficient of $f$ on the right hand in the equation \ref{REC} is called the charge of $M_v$. 
                Under a chosen Waldhausen basis $\{ f_{v,w},z_{v,w} \}_{w\in W}$ of $M_v$, 
                $c_w = g_{w,v}(f_{w,v}) = a_{w,v} f_{v,w} + b_{w,v} z_{v,w}$. 
                Then the charge can be written as
                \[ k_v = \sum_{w\in W} \frac{a_{w,v}}{b_{w,v}}. \]
                According to Lemma \ref{FR}, the charge is independent of the choice of a Waldhausen basis. 
                But $ \frac{a_{w,v}}{b_{w,v}} $ which is called the slope for the homological class $f_{w,v}$ is related to the 
                Waldhausen basis $\{ f_{v,w},z_{v,w} \}_{w\in W}$. 

                If a Seifert block has a boundary $T$ which is also the boundary of the whole graph manifold, 
                then this boundary $T$ has no contribution to the charge of the Seifert block. 
                We define its slope always be zero in the calculation of the charge.  
                
                \ 

                \noindent\textbf{Strictly diagonally dominant graph manifold. }

                In the paper of Derbez, Liu and Wang \cite{DerLW20}, they introduce the terminology, strictly diagonally dominance, 
                to describe a property of graph manifolds. This terminology is original from the theory of matrices. 

                If a $n\times n$ matrix $A=(a_{ij})$ satisfies $|a_{ii}| > \sum_{j\neq i} |a_{ij}|$ for all $i$, 
                then the matrix $A$ is said to be strictly diagonally dominant.  
                
                \begin{proposition}
                    If $A$ is a strictly diagonally dominant matrix, then $A$ is invertible. 
                \end{proposition}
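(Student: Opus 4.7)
The plan is to prove this by contradiction via the standard maximum-coordinate argument. Suppose $A$ is strictly diagonally dominant but not invertible; then there exists a nonzero vector $x = (x_1, \ldots, x_n)^T$ with $Ax = 0$. I would choose an index $i$ such that $|x_i| = \max_{j} |x_j|$, and note that $|x_i| > 0$ since $x$ is nonzero.

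Next I would extract the $i$-th equation $\sum_{j} a_{ij} x_j = 0$, rewrite it as $a_{ii} x_i = -\sum_{j \neq i} a_{ij} x_j$, and apply the triangle inequality to obtain
\[
|a_{ii}|\,|x_i| \;\leq\; \sum_{j \neq i} |a_{ij}|\,|x_j| \;\leq\; |x_i| \sum_{j \neq i} |a_{ij}|.
\]
Dividing through by $|x_i|>0$ yields $|a_{ii}| \leq \sum_{j\neq i} |a_{ij}|$, which directly contradicts the strict diagonal dominance hypothesis applied at row $i$. Hence $Ax = 0$ forces $x = 0$, and $A$ is invertible.

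There is essentially no obstacle here; the only delicate point is to ensure $|x_i| > 0$ so that the final division is legitimate, which is immediate from the assumption that $x \neq 0$. The argument is entirely elementary and does not require any of the graph manifold machinery developed earlier in the paper.
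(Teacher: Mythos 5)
Your proof is correct and follows essentially the same maximum-coordinate contradiction argument as the paper, which also takes a nonzero null vector, selects the entry of maximal absolute value, and contradicts strict diagonal dominance in that row. Your version is in fact slightly more careful than the paper's, since you explicitly note that $|x_i|>0$ justifies the final division.
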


                \begin{proof} 
                    Suppose $A$ is not invertible, then there is a nonzero vector $\bm{x}$ such that $A\bm{x}=\bm{0}$. 
                    Let $x_i$ be the element with maximum absolute value in $\bm{x}$. 
                    Then there is an equation $a_1 x_1 + \cdots +a_i x_i + \cdots + a_n x_n = 0$. 
                    However, 
                    \[ \left\vert \sum_{j\neq i} a_{ij} x_j \right\vert \leq \sum_{j\neq i} |a_{ij} x_j| 
                        \leq \left( \sum_{j\neq i} |a_{ij}| \right) |x_i| < |a_{ii} x_i|. 
                    \]
                    This deduces a contradiction. Hence, $A$ is invertible. 
                \end{proof}

                In the aspect of the graph manifolds, the definition is as the following. 

                \begin{definition}\label{DSB}
                For a Seifert block $M_v$, if the charge $k_v$ and all intersection numbers $b_{v,w}$ satisfying 
                \begin{equation}
                    |k_v| > \sum_{w\in W} \frac{1}{|b_{v,w}|}, 
                \end{equation}
                in which $W$ is the set of all adjacent blocks of $M_v$. Then this Seifert block is called strictly diagonally dominant. 
                If a $\gmd$ has every Seifert block strictly diagonally dominant, then it is called a strictly diagonally dominant $\gmd$. 
                \end{definition}

                Strictly diagonally dominance is a relation about the charges and intersection indices of a $\gmd$. 
                In the following section \ref{FinC} we will prove this relation still holds under a special finite cover. 

        \section{Seifert Motion Group}\label{MG}
            
            Seifert geometry, or $\TSL2R$ geometry, is one of the eight 3-dimensional geometries as classified by Thurston \cite{Thu97}. 
            We take Lie group $\TSL2R$ as the model of the Seifert geometry's space. 
            The identity component of its isometry group can be identified with $\MG$, which is called as Seifert motion group. 

            Eisenbud, Hirsch and Neumann gave criteria for a Seifert manifold to admit a transverse foliation \cite{EisHN81}. 
            In their paper they studied the homomorphism $\pi_1(M) \to \TSL2R$.  
            The name of Seifert geometry is from Brooks and Goldman.  
            They defined the Seifert volume by using a $\rep$ to the Seifert motion group for 3-manifolds \cite{BroG84}. 
            Then Derbez, Liu and Wang gave a formula to compute the $\rep$ volume for a $\gmd$ with a $\rep$ to the Seifert motion group \cite{DerLW20}. 
            So, in this section we state the construction of the Seifert motion group and classify elements in this group.   

            \subsection{Construction of Seifert motion group}

                In this part we use the notation from Derbez, Liu and Wang \cite{DerLW20}. 
                Here $\TSL2R$ is the universal cover of $\PSL2R$. There is a map $k: \R \to \PSL2R$,
                \[ k(r) = \begin{pmatrix}
                    \cos \pi r & -\sin \pi r \\
                    \sin \pi r & \cos \pi r
                \end{pmatrix}, r\in \R. \]
                Then there exists a lift $\tilde{k}: \R \to \TSL2R$. 
                The center $Z(\TSL2R)$ of $\TSL2R$ is the group $\{ \tilde{k}(n),n\in \Z \}$, 
                which projects to the identity in $\PSL2R$. 

                The Seifert motion group is the quotient of a product group $\TSL2R \times \R$ by a subgroup $\{ (\tilde{k}(n),-n),$ $n\in \Z \}$. 
                Then the Seifert motion group is denoted by $\MG$. 
                Its elements can be denoted as $g[s]$ for $g \in \TSL2R $ and $s \in \R$. 
                The multiplication rule is 
                \[ g[s]g'[s'] = gg'[s+s'], \]
                and there is a relation 
                \[ g[s+n] = g\tilde{k}(n)[s], n \in \Z. \]

            \subsection{Classifying elements in Seifert motion group}

                Elements in SL(2,$\R$) can be classified by their traces. 
                Take a matrix $A \in$ SL(2,$\R$). 
                If the absolute value of the trace $|\text{Tr}(A)| < 2 $, the matrix $A$ is said to be elliptic. 
                If $|\text{Tr}(A)| > 2 $, it is said to be hyperbolic. 
                If $|\text{Tr}(A)| = 2 $ and $A$ is not the identity matrix or minus identity matrix, it is said to be parabolic. 
                Since $\PSL2R = \text{SL}(2,\R)/\pm I_2 $, its elements can be classified by their representative elements. 
                This way can be checked that is well-defined. 
            
                Then we consider a natural projection $pr: \MG \to \PSL2R$. The elements in the Seifert motion group can be classified by their images. 
                Let $g\in \MG$, 
                \begin{itemize}
                    \item if $ pr(g)$ is the identity in $\PSL2R$, we call it central type;
                    \item if $ pr(g)$ is an elliptic element in $\PSL2R$, we call it elliptic type;
                    \item if $ pr(g)$ is a hyperbolic element in $\PSL2R$, we call it hyperbolic type;
                    \item if $ pr(g)$ is a parabolic element in $\PSL2R$, we call it parabolic type. 
                \end{itemize}

                Then there is a proposition about the subgroup $C(g)=\{ h\in \MG | gh=hg \}$. 
                \begin{proposition}\label{CP}
                    Let $a,b$ be two elements in the Seifert motion group $\MG$. 
                    Then $ab=ba$ if and only if $pr(a)pr(b) = pr(b)pr(a)$ in $\PSL2R$. 
                    Hence, for a noncentral element $g\in \MG$, the subgroup $C(g)$ is an Abelian group.  
                \end{proposition}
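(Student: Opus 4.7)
The plan is to reduce everything to multiplication in $\TSL2R$ and then exploit the abelianness of centralizers of non-identity elements in $\PSL2R$. The forward direction is immediate: the multiplication rule $g[s]\cdot g'[s'] = gg'[s+s']$, together with the fact that $\tilde k(\Z)$ lies in the kernel of $\TSL2R \to \PSL2R$, makes $pr: \MG \to \PSL2R$ a group homomorphism, so $ab = ba$ forces $pr(a)\,pr(b) = pr(b)\,pr(a)$.

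For the converse, write $a = g[s]$ and $b = g'[s']$, so that $ab = gg'[s+s']$ and $ba = g'g[s+s']$. Hence it suffices to upgrade the hypothesis $pr(a)\,pr(b) = pr(b)\,pr(a)$ to the stronger equality $gg' = g'g$ in $\TSL2R$. The key observation is: for any non-identity $x \in \PSL2R$, the centralizer $C_{\PSL2R}(x)$ is a one-parameter subgroup $H_x$ (rotational, hyperbolic, or parabolic, following the classification just introduced), and its preimage $\widetilde H_x \subset \TSL2R$ under the universal covering map is an abelian subgroup — isomorphic to $\R$ when $x$ is elliptic and to $\R \times \Z$ when $x$ is hyperbolic or parabolic. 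Granting this, the converse follows by case analysis: if either $pr(g)$ or $pr(g')$ equals the identity, the corresponding lift lies in $Z(\TSL2R)$ and commutes with everything; otherwise $pr(g)$ and $pr(g')$ both lie in the common one-parameter subgroup $H = C_{\PSL2R}(pr(g))$, so that $g, g' \in \widetilde H$, which is abelian.

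The main obstacle is verifying the observation about $\widetilde H_x$. This reduces to the fact that a central $\Z$-extension whose quotient is either $\R$ or $S^1$ is automatically abelian (in the line case, because $H^{2}(\R,\Z)$ vanishes and the extension splits; in the circle case, because the connected preimage inside $\TSL2R$ is already identified with the universal cover $\R$ of $S^1$). For the last clause, one first notes that a non-central $g \in \MG$ must satisfy $pr(g) \neq e$: if $pr(g) = e$, the relation $g[s+n]=g\tilde k(n)[s]$ lets us rewrite $g$ as $[t]$ for some $t \in \R$, and the multiplication rule shows directly that $[t]$ commutes with every element of $\MG$, contradicting non-centrality. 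For such $g$ and any $h_1, h_2 \in C(g)$, the first half of the proposition puts $pr(h_1), pr(h_2)$ in $C_{\PSL2R}(pr(g))$, which is abelian by the observation above; hence $pr(h_1)\,pr(h_2) = pr(h_2)\,pr(h_1)$, and a second application of the first half gives $h_1 h_2 = h_2 h_1$.
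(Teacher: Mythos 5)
Your proof is correct, but the key step --- the converse implication --- is argued by a genuinely different route than the paper's. The paper first observes (as you could also) that commutativity of the projections forces the commutator $aba^{-1}b^{-1}$ to lie in the center of $\MG$, and then kills it by a connectedness-versus-discreteness argument applied to the continuous commutator map $(\MG)\times(\MG)\to\MG$, using that the center is discrete in its $\TSL2R$-component. You instead lift the problem to $\TSL2R$ and invoke the structure of centralizers in $\PSL2R$: for $x\neq e$ the centralizer is a one-parameter subgroup ($\cong S^1$ or $\R$), whose full preimage in the universal cover is abelian (a connected cover of $S^1$, or a split central $\Z$-extension of $\R$), so the lifts $g,g'$ already commute in $\TSL2R$ and hence $ab=ba$ on the nose. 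Your version requires the small case analysis of elliptic/hyperbolic/parabolic centralizers but is arguably more robust: the paper's path argument, as literally stated, connects $a$ and $b$ to the identity by arbitrary paths, along which the intermediate commutators need not be central, so the discreteness of the center only constrains the endpoint --- your argument sidesteps this by staying inside the abelian preimage $\widetilde H$ throughout. The forward direction and the deduction that $C(g)$ is abelian for noncentral $g$ (including the observation that noncentral forces $pr(g)\neq e$) coincide in substance with the paper's, which phrases the abelianness of $C_{\PSL2R}(pr(g))$ via simultaneous diagonalization rather than via one-parameter subgroups.
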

                
                \begin{proof}
                    It is direct that $ab=ba$ deduces $pr(a)pr(b) = pr(b)pr(a)$ in $\PSL2R$. 
                    Conversely, if $pr(a)pr(b) = pr(b)pr(a)$, 
                    by the construction of the Seifert motion group, $aba^{-1}b^{-1}$ lies in the center of $\MG$. 
                    
                    For $a,b \in \MG$, we have continuous paths starting from the identity element to $a$,$b$ respectively. 
                    Notice  
                    \begin{align*}
                        (\MG)\times& (\MG)  \to \MG,  \\
                        (a,b) &\mapsto aba^{-1}b^{-1} 
                    \end{align*}
                    is a continuous map.  
                    However, the group $\{ \tilde{k}(n),n\in \Z \} \times_{\Z} \R $, the center of the Seifert motion group, is discrete in the first component. 
                    So, the first component of $aba^{-1}b^{-1}$ is the identity element in $\TSL2R$. 
                    Notice that the second component of $aba^{-1}b^{-1}$ is always zero. 
                    Hence, $aba^{-1}b^{-1}$ equal to the identity element in $\MG$. 

                    If two nontrivial elements in $\PSL2R$ are commutable, they can be diagonalized at the same time and lie in the same type. 
                    Hence, for a noncentral element $g\in \MG$, $C(pr(g)) = \{ pr(h)\in \PSL2R | pr(g)pr(h)=pr(h)pr(g) \}$ is an Abelian group. 
                    So, $C(g)$ is an Abelian group. 
                \end{proof}

                \ 
                
                For elements in the Seifert motion group we have another classification with respect to the order of their projective images.  
                \begin{definition}\label{PO}
                    If an element in Seifert motion group is projected to an element with finite order in $\PSL2R$, 
                    then we call it an element with projectively finite order. 

                    If an element in Seifert motion group is projected to an element with infinite order in $\PSL2R$, 
                    then we call it an element with projectively infinite order. 
                \end{definition}
                
                \begin{remark}\label{ROE}
                    Let $f = \tilde{k}(n)$ be a central element in $\MG$. 
                    Consider an equation $x^m = f, m\in \Z $. 
                    If $ n = lm + r, l,r\in \Z$, then $\tilde{k}(l)[\frac{r}{m}]$ is a solution of the equation. 
                    However, $\tilde{k}(l)[\frac{r}{m}]$ is not a unique solution. 
                    We project the equation to $\PSL2R$ then $(pr(x))^m = pr(f) = I_2$. 
                    So, any solution of the equation $x^m = f$ is an element with projectively finite order. 
                \end{remark}

        \section{JSJ characteristic Finite Cover}\label{FinC}
        
            In this section we discuss the finite covers of graph manifolds and the relation of two topological invariants.   
            If some finite cover of $M$ has a property $P$, then $M$ is said to have a property $P$ virtually. 
            Sometimes we only consider the existence of $\rep$s. 
            Therefore, we often pass to a finite cover to prove theorems or construct representations.  
            
            According to Kapovich and Leeb's result \cite[Lemma 2.1]{KapL98}, a $\gmd$ has a good structure passing to a finite cover. 
            We state the lemma as the following: 
            \begin{lemma}[Kapovich, Leeb]\label{KLL}
                Any $\gmd$ has an orientable finite cover where all Seifert blocks are trivial circle bundle over a surface with genus greater than 1. 
                Furthermore, we can arrange the intersection index of the fibers of adjacent Seifert blocks are $\pm 1$. 
            \end{lemma}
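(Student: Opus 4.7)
The plan is to produce the finite cover in two stages: first trivialize the Seifert fibration on each block, then take a further abelian cover to normalize the intersection indices. After passing to the orientation double cover if necessary, I may assume $M$ is orientable. For each Seifert block $M_v$, the base orbifold $O_v$ has negative Euler characteristic, so by Selberg's lemma $\pi_1(O_v)$ has a torsion-free finite-index subgroup corresponding to a finite cover of $O_v$ by an honest surface. Pulling back $M_v\to O_v$ along this cover yields a finite cover of $M_v$ that is a genuine $S^1$-bundle over a surface, and because $M_v$ has nonempty torus boundary, this bundle is automatically trivial; passing to a further surface cover lets me assume each base surface has genus at least $2$.

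The per-block covers then need to be assembled into a cover of $M$. Along a JSJ torus $T_{v,w}\cong T^{2}$, the boundary covers induced from the two sides determine two finite-index sublattices of $\pi_1(T_{v,w})\cong \Z^2$; I would replace each by a common finite-index refinement and enlarge the ambient covers of $M_v$ and $M_w$ so that their restriction to $T_{v,w}$ realizes this common sublattice. Residual finiteness of Seifert-fibered pieces over orientable surface bases, together with the freedom to combine a finite cyclic cover along the fiber with a finite cover of the base, ensures such matching can be achieved for each of the finitely many JSJ tori. Intersecting the resulting finite-index subgroups of $\pi_1(M)$ produces a finite cover $M'\to M$ whose Seifert blocks are all trivial circle bundles over surfaces of genus at least $2$.

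For the second stage I would choose Waldhausen bases on $M'$. In such bases the intersection index $b_{v,w}$ appears (up to sign) as an off-diagonal entry of the gluing matrix, so forcing $|b_{v,w}|=1$ becomes a divisibility problem. An $n$-fold cyclic cover of $M_v$ along its fiber $S^1$ factor scales the fiber sublattice on each boundary torus by $n$, and a compatible cover on the adjacent block then divides the intersection index on the shared torus by $n$. Choosing one integer per JSJ torus and assembling the resulting fiber covers into a single cover of $M'$ normalizes every $|b_{v,w}|$ to $1$ simultaneously.

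The main obstacle is the assembly step rather than either of the two block-level constructions: one must ensure that the trivializing covers built block-by-block are chosen compatibly across every JSJ torus so that they glue into a single finite cover of $M$, rather than merely existing locally. This is a residual-finiteness argument, and the delicate point is keeping the book-keeping consistent — preserving the trivial-bundle structure and the genus lower bound while simultaneously matching the boundary lattices on every JSJ torus, and then doing it again for the intersection-index normalization without destroying what was gained in the first stage.
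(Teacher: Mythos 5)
The paper does not actually prove this lemma --- it is quoted from Kapovich and Leeb \cite[Lemma 2.1]{KapL98} --- so your proposal can only be measured against the standard argument, whose two-stage skeleton (trivialize each block, then normalize the intersection indices, gluing block covers along matched torus covers) you have correctly reproduced. Stage one is fine, and you rightly flag the assembly across JSJ tori as the delicate point; but ``residual finiteness'' is not the property that does the work there. What you actually need, and do not establish, is the stronger \emph{boundary-controlled} covering property: a Seifert block over a hyperbolic base admits finite covers inducing a \emph{prescribed} cover on each of its boundary tori. This reduces to producing finite covers of a hyperbolic surface with boundary realizing prescribed degrees on its boundary circles, combined with cyclic covers in the fiber direction; it is true, but it is the real content of the gluing step and is asserted rather than proved.

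The genuine error is in stage two. If $T'\to T$ has degree $d$ and the two fiber classes lift to $n_vf_{v,w}$ and $n_wf_{w,v}$ (smallest multiples lying in the corresponding sublattice of $H_1(T;\Z)$), then the intersection index upstairs is $n_vn_w\,b_{v,w}/d$. Hence a characteristic $n\times n$ cover, where $n_v=n_w=n$ and $d=n^2$, leaves $b_{v,w}$ unchanged --- exactly as the paper's own Proposition~\ref{SDD} records --- and an $n$-fold cyclic cover along the fiber of one block, which scales that fiber class by $n$ without forcing the other fiber into the new lattice, multiplies $|b_{v,w}|$ by $n/\gcd(n,a_{v,w})\geq 1$ rather than dividing it. So your proposed mechanism moves the index in the wrong direction. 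The correct move is the transverse one: on each JSJ torus pass to the cover corresponding to the sublattice $\langle f_{v,w}, f_{w,v}\rangle$, which has index $|b_{v,w}|$, contains both fiber classes primitively, and therefore yields intersection index $\pm1$ upstairs. In the basis $\{f_{v,w},z_{v,w}\}$ this sublattice is $\langle f_{v,w}, b_{v,w}z_{v,w}\rangle$, i.e.\ it is realized by unwrapping the \emph{base} direction by $|b_{v,w}|$ on each side (a cover of $F_v$ with the prescribed boundary degree, crossed with the identity on the $S^1$ factor), not the fiber direction. With that correction, and with the boundary-controlled covering property supplied, your outline closes up.
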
  
                    
            By the above lemma, passing to a finte cover each Seifert block $M_v$ can be homeomorphic to $F_v \times S^1 $. 
            In the following we assume the $\gmd$ satisfies the conditions in lemma \ref{KLL}.  
            Moreover, we consider the dual graph $\Gamma$ is a nontrivial simple graph, which is a graph contains no multiple edges and self-loops. 
            Hence, every JSJ torus separates two distinct Seifert blocks from each other. 
            This condition can also be satisfied by passing to a finite cover. 
        
            \begin{definition}
                Set $p:M' \to M$ be a regular finite cover of $M$. 
                Let the set $\mathcal{T}$ be a union of JSJ tori of $M$, the set $\mathcal{T}'$ be the preimage of $\mathcal{T}$ 
                and $n$ be a positive integer. 
                If for each torus $T$ of $\mathcal{T}$ and each component $T'$ of $\mathcal{T}'$ over $T$, 
                the restriction $p|:T' \to T$ is a characteristic cover with index $n\times n$, 
                then this finite cover $p:M' \to M$ is called a JSJ $n$-characteristic finite cover. 
            \end{definition}
            A regular finite cover $p_1:M' \to M$ is not necessarily a JSJ characteristic finite cover. 
            However, we can always find a JSJ characteristic finite cover $p_2:M'' \to M$ such that $p_2$ factors through $p_1$. 
            Hence, we can prove some properties on JSJ characteristic finite cover and then deduce to a general case.   
            For a JSJ characteristic finite cover, there is a property about topological invariants. 
            \begin{proposition}\label{SDD}
                Let $M$ be a strictly diagonally dominant $\gmd$,  
                then any JSJ characteristic finite cover of $M$ is also strictly diagonally dominant. 
            \end{proposition}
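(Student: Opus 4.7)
The plan is to establish, for each Seifert block $M'_{v'}$ of a JSJ $n$-characteristic cover $p:M' \to M$ lying over a block $M_v$ of $M$, that both sides of the strict diagonal dominance inequality scale by a common positive factor, so that the inequality for $M_v$ implies the one for $M'_{v'}$ directly.

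First I would fix a component $M'_{v'}$ of $p^{-1}(M_v)$. Since $p$ is regular, the restriction $p|_{M'_{v'}}:M'_{v'} \to M_v$ is itself a regular cover, of some degree $d_v$, and regularity forces each JSJ boundary torus $T_{v,w}\subset\partial M_v$ to be covered by exactly $\ell_v := d_v/n^2$ boundary tori of $M'_{v'}$, each by the given $n\times n$ characteristic cover. I would then choose a trivialization $M_v\cong F_v\times S^1$ and lift it to $M'_{v'}\cong F'_{v'}\times S^1$, so that the natural Waldhausen bases $\{z_{v,w},f_{v,w}\}$ and $\{z'_{v',w'},f'_{v',w'}\}$ satisfy $p_*(f'_{v',w'})=nf_{v,w}$ and $p_*(z'_{v',w'})=nz_{v,w}$. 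The lift $\{z'_{v',w'}\}$ is a legitimate Waldhausen basis because $\sum_{w'}z'_{v',w'}$ is exactly the total boundary of $F'_{v'}$, hence null-homologous in $M'_{v'}$.

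Next I would show that the intersection indices and slopes are preserved torus by torus. The gluings of $M'$ cover the gluings of $M$, so applying $p_*$ to $c'_{w'}=g'_{w',v'}(f'_{w',v'})$ yields $n\cdot c_w = n(a_{w,v}f_{v,w}+b_{w,v}z_{v,w})$; comparing coefficients with $c'_{w'}=a'_{w',v'}f'_{v',w'}+b'_{w',v'}z'_{v',w'}$ gives $a'_{w',v'}=a_{w,v}$ and $b'_{w',v'}=b_{w,v}$. From Lemma \ref{FR} this yields
\[
    k_{v'} \;=\; \sum_{w'}\frac{a'_{w',v'}}{b'_{w',v'}} \;=\; \ell_v \sum_w \frac{a_{w,v}}{b_{w,v}} \;=\; \ell_v \, k_v,
\]
and the same counting gives $\sum_{w'}1/|b_{v',w'}| = \ell_v\sum_w 1/|b_{v,w}|$. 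Dividing by $\ell_v>0$, the strict inequality for $M'_{v'}$ reduces to the one for $M_v$, which holds by hypothesis; as this works for every lifted block, $M'$ is strictly diagonally dominant.

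I expect the main technical point to be the normalization $p_*(f'_{v',w'})=nf_{v,w}$: the fiber of $M'_{v'}$ must cover the fiber of $M_v$ with degree exactly $n$. This will be forced by the $n\times n$ characteristic condition on every boundary torus, together with the fact that the fiber of a Seifert block lies in the center of its fundamental group, so the cover cannot split the fiber direction inconsistently at different boundary components. Once this identification is in place, the rest is linear algebra on each torus, and the counting $\ell_v = d_v/n^2$ is automatic from regularity of the cover.
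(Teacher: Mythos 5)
Your proof is correct and follows essentially the same route as the paper's: show that the slopes $a_{w,v}/b_{w,v}$ and intersection indices $b_{v,w}$ are unchanged on each lifted boundary torus, observe that every boundary torus of $M_v$ is covered by the same number of tori of $M'_{v'}$, and conclude that both sides of the dominance inequality scale by that common factor. Your normalization argument via $p_*(f')=nf$, $p_*(z')=nz$ is in fact a more explicit justification of the invariance of the gluing data than the paper gives, but it is the same proof.
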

        
            \begin{proof}
                Suppose $p: M' \to M $ is a JSJ characteristic finite cover. 
                Take a Seifert block $M'_{v'}$ which is a component of a Seifert block $M_v$'s preimage. 
                Since this finite cover is JSJ characteristic, the slope of each torus boundary is invariant. 
                Then the covering degree of the fiber is fixed for each Seifert block. 
                Hence, the gluing matrices are also invariant under the covering. 
                    
                Let $W$ be the set of all adjacent block of $M_v$. 
                If an edge $e$ with $\partial e = (v,w),w \in W$ has a preimage $p^{-1}(e)$ connecting $v'$, 
                we set $\partial (p^{-1}(e)) = \{ (v',w') | w'\in W'_{v'} \}$ 
                in which $W'_{v'}$ is the set of all Seifert blocks connecting $v'$ by one edge of $p^{-1}(e)$. 
                For each torus $T'_{v',w'}$ dual to $(v',w')$, the intersection number $b_{v',w'} = b_{v,w}$ 
                since the gluing maps have the same matrix $\rep$ for $T'_{v',w'}$ and $T_{v,w}$. 
                Hence, the sum of slope of $T'_{v',w'},w' \in W'_{v'} $ is just $|W'_{v'}|$, the size of $W'_{v'}$, times the slope of $T_{v,w}$. 
                Notice the size of $W'_{v'}$ is the same for all edges connecting $v'$. 
                Since the degree of fiber is fixed, 
                the degree of JSJ characteristic cover for each torus is the same. 
                Hence, the components of preimage $p^{-1}(e)$ for each $e$ is also the same. 
            
                Because $M_v$ is strictly diagonally dominant, we have 
                    \[ |k_v| > \sum_{w\in W} \frac{1}{|b_{v,w}|}. \] 
                As for $M'_{v'}$, the charge 
                \begin{align*}
                    k_{v'} &= \sum_{w\in W } \sum_{w'\in W'_{v'}} \frac{a_{v',w'}}{b_{v',w'}}
                            = \sum_{w\in W } \left( |W'_{v'}| \times \frac{a_{v',w'}}{b_{v',w'}} \right) \\
                            &= |W'_{v'}| \times \sum_{w\in W } \frac{a_{v',w'}}{b_{v',w'}} 
                            = |W'_{v'}| \times \sum_{w\in W } \frac{a_{v,w}}{b_{v,w}} \\
                            &= |W'_{v'}| \times k_v.
                \end{align*}
            
                Then 
                \begin{align*}
                    |k_{v'}| &= |W'_{v'}| \times |k_v| > |W'_{v'}| \times \sum_{w\in W} \frac{1}{|b_{v,w}|} \\
                            &= \sum_{w\in W} \left(|W'_{v'}| \times \frac{1}{|b_{v,w}|} \right)
                            = \sum_{w\in W} \left(|W'_{v'}| \times \frac{1}{|b_{v',w'}|} \right) \\
                            &= \sum_{w\in W} \sum_{w'\in W'_{v'}} \frac{1}{|b_{v',w'}|}
                \end{align*}
                So the Seifert block $M'_{v'}$ is also strictly diagonally dominant. 
                
                Since the choice of vertex $v$ is arbitrary, each Seifert block of $M'$ is strictly diagonally dominant. 
                Hence, $M'$ is a strictly diagonally dominant $\gmd$.  
            \end{proof}
                
        \section{Representations to Seifert Motion Group}\label{Rep}
            
            If a $\rep$ of a $\gmd$ is faithful, then it restricted on each Seifert block is faithful. 
            In convenience, we define a ``local" faithful $\rep$ as the following. 
            \begin{definition}\label{VFR}
                If a $\rep$ restricted on a Seifert block dual to a vertex $v$ is faithful, then we call the $\rep$ a vertex faithful representation, 
                and more precisely, faithful at $v$. 
            \end{definition} 
            Sometimes we call a $\rep$ vertex faithful without indicating a vertex $v$. 

            In the following we discuss the $\rep$s restricted on each Seifert block and classify them by the images in the Seifert motion group. 
            Particularly, we list equations for Seifert blocks with Abelian images. 
            
                \subsection{Faithful representations}

                    Consider a Seifert block $M_v$ which is a trivial circle bundle over a compact surface with genus $g \geq 2$.   
                    Suppose there is a faithful representation on $M_v$ to the Seifert motion group. 
                    We have a conclusion as the following: 
                    \begin{proposition}\label{IFR}
                        Let a Seifert block $M_v$ be a trivial circle bundle over a compact surface $F_v$ with genus $g \geq 2$. 
                        If $M_v$ has a faithful representation $\rho : \pi_1(M_v) \to \MG $, 
                        then the image of the fiber of $M_v$ is a nontrivial element of central type 
                        and the images of boundaries of the base surface $F_v$ are all noncentral elements with projectively infinite order. 
                    \end{proposition}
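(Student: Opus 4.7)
My plan is to work with the splitting $\pi_1(M_v)\cong\pi_1(F_v)\times\langle f\rangle$, where $f$ represents the regular fiber, and to analyze the composition $pr\circ\rho|_{\pi_1(F_v)}:\pi_1(F_v)\to\PSL2R$ with $pr:\MG\to\PSL2R$ as in Section \ref{MG}. The essential observations are that $\ker(pr)$ is Abelian (identified with the $\R$-factor $\{e[s]:s\in\R\}$) and that $\pi_1(F_v)$, being a free group or a closed hyperbolic surface group with $g\geq 2$, admits no nontrivial Abelian normal subgroup.

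My first step would be to show that $pr\circ\rho|_{\pi_1(F_v)}$ is faithful. The kernel $K$ of this composition is a normal subgroup of $\pi_1(F_v)$ with $\rho(K)\subseteq\ker(pr)\cong\R$, so $K$ is a torsion-free Abelian normal subgroup of $\pi_1(F_v)$. By the structural fact above, $K$ must be trivial. Concretely, such a $K$ would be cyclic; conjugation of its generator yields only $\pm 1$ automorphisms, giving an index-$\leq 2$ centralizer of $K$, but centralizers of nontrivial elements in free or closed surface groups are themselves cyclic, forcing $\pi_1(F_v)$ to be virtually cyclic and contradicting $g\geq 2$.

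Next I would exploit the centrality of $f$ in $\pi_1(M_v)$. The element $\rho(f)$ commutes with all of $\rho(\pi_1(M_v))$, so by Proposition \ref{CP} its projection $pr(\rho(f))$ commutes with every element of $pr(\rho(\pi_1(M_v)))$, in particular with the nonabelian image $pr(\rho(\pi_1(F_v)))$. Since the centralizer in $\PSL2R$ of any non-identity element is an Abelian one-parameter subgroup (elliptic, parabolic, or hyperbolic), no non-identity element can centralize a pair of non-commuting elements; therefore $pr(\rho(f))=e$, i.e., $\rho(f)$ is of central type. It is nontrivial because the fiber generates an infinite cyclic subgroup (as $\chi(O)<0$) and $\rho$ is faithful. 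For a boundary curve $c$ of $F_v$, nontriviality of $c$ in $\pi_1(F_v)$ together with faithfulness of $pr\circ\rho|_{\pi_1(F_v)}$ yields $pr(\rho(c))\neq e$, so $\rho(c)$ is noncentral; and if $pr(\rho(c))$ had some finite order $n\geq 1$, then $c^n\in K=\{e\}$, contradicting the torsion-freeness of $\pi_1(F_v)$.

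I expect the main obstacle to be establishing triviality of $K$ in the first step, since this is where the hypothesis $g\geq 2$ really enters and requires the structural input that $\pi_1(F_v)$ has cyclic centralizers and is not virtually cyclic. Once this is in place, the remaining deductions are essentially immediate from Proposition \ref{CP} and the Abelian-centralizer property of non-identity elements in $\PSL2R$.
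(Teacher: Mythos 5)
Your proof is correct, and its treatment of the fiber coincides with the paper's: both arguments rest on the fact that the centralizer of a noncentral element of $\MG$ (equivalently, of a non-identity element of $\PSL2R$) is Abelian, so a central $f$ with noncentral image would force the whole image to be Abelian, contradicting faithfulness. Where you diverge is in the boundary-curve part. The paper argues directly: if $pr(\rho(c_j))$ had finite order $n$, then $\rho(c_j)^n$ would be central in $\MG$, so by faithfulness $c_j^n$ would be central in $\pi_1(M_v)=\pi_1(F_v)\times\langle f\rangle$, which the presentation rules out (the center is $\langle f\rangle$ and $c_j^n$ lies in the free factor, forcing $c_j^n=1$). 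You instead first prove the stronger statement that $pr\circ\rho|_{\pi_1(F_v)}$ is faithful, via the structural fact that a free group of rank $\geq 2$ (or a closed surface group of genus $\geq 2$) has no nontrivial Abelian normal subgroup, and then read off both the noncentrality and the projectively infinite order of $\rho(c_j)$ as corollaries. Your route costs an extra lemma about normal subgroups of surface groups that the paper does not need, but it buys a cleaner and stronger intermediate conclusion (injectivity of the projected representation on the whole of $\pi_1(F_v)$, not just on the boundary classes), which is the kind of statement one would want anyway if one pushed further into the structure of such representations. Both arguments are sound.
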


                    \begin{proof}
                        Since $M_v$ is a trivial circle bundle over a compact surface $F_v$ with genus $g \geq 2$, 
                        the fundamental group of $M_v$ has a presentation as 
                        \[ 
                        \pi_1(M_v) = \Bigl< a_1,b_1,\cdots,a_g,b_g,c_1,\cdots,c_l \Big\vert \prod_{i=1}^{g} [a_i,b_i] = \prod_{j=1}^{l} c_j \Bigr> \times \langle f \rangle. 
                        \]
                        Here $[a_i,b_i]$ denotes $a_i b_i a_i^{-1} b_i^{-1}$. The integer $g$ is the genus of the base surface and $l$ is the number of boundary components. 

                        The image of the fiber of $M_v$ projects to identity under the projection $pr:\MG \to \PSL2R$, 
                        otherwise the representation $\rho$ is Abelian which contradicts the assumption that $\rho$ is faithful. 
                        Hence, the image of the fiber $\rho(f)$ is a nontrivial central element. 

                        As for the images of boundaries $\{ c_j \}$ of the base surface $F_v$, they cannot lie in the center. 
                        If one of them, $\rho(c_j)$, has projectively finite order, then there exists an integer $n$ such that $\rho(c_j)^n$ lies in the center. 
                        Since $\rho$ is faithful, $(c_j)^n$ commute with every generator in $\pi_1(M_v)$. 
                        This result contradicts the presentation of $\pi_1(M_v)$. 
                        Hence, the images of boundaries of base surface $F_v$ are all noncentral elements with projectively infinite order. 
                    \end{proof}

                    \begin{corollary}\label{ABF}
                        If a $\rep$ $\rho$ of a $\gmd$ $M$ restricted on a Seifert block $M_v$ is faithful, 
                        then the adjacent blocks of $M_v$ all have noncentral fibers with projectively infinite order. 
                    \end{corollary}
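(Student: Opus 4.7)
The plan is to push Proposition \ref{IFR} across a JSJ torus. First, I would fix an adjacent block $M_w$ of $M_v$ and work inside the shared torus $T_{v,w}$. Choose a Waldhausen basis $\{f_{v,w}, z_{v,w}\}$ of $\pi_1(T_{v,w})$ from $M_v$'s side in which $z_{v,w}$ is identified with the boundary component $c_w$ of $F_v$ in the presentation
\[
\pi_1(M_v) = \Bigl<a_1,b_1,\ldots,a_g,b_g,c_1,\ldots,c_l \Big\vert \prod_i [a_i,b_i] = \prod_j c_j\Bigr> \times \langle f_v\rangle
\]
used in Proposition \ref{IFR}. By that proposition, $\rho(f_v)$ is a nontrivial central element and $\rho(c_w) = \rho(z_{v,w})$ is a noncentral element whose image under $pr:\MG \to \PSL2R$ has infinite order.

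Next I would express the regular fiber $f_w$ of $M_w$ in the Waldhausen basis of $M_v$. After the gluing identification along $T_{v,w}$ we can write
\[
f_w = a_{w,v}\, f_{v,w} + b_{w,v}\, z_{v,w} \quad \text{in } \pi_1(T_{v,w}),
\]
where $b_{w,v}$ is the intersection index and is nonzero by definition. Since $\pi_1(T_{v,w})$ is abelian and $\rho$ is a homomorphism, I get
\[
\rho(f_w) = \rho(f_v)^{a_{w,v}}\, \rho(z_{v,w})^{b_{w,v}}.
\]

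Now I project to $\PSL2R$. Because $\rho(f_v)$ is central, $pr(\rho(f_v))$ is the identity, so
\[
pr(\rho(f_w)) = pr(\rho(z_{v,w}))^{b_{w,v}}.
\]
Since $pr(\rho(z_{v,w}))$ has infinite order and $b_{w,v} \neq 0$, the element $pr(\rho(f_w))$ also has infinite order; in particular it is not the identity in $\PSL2R$. Hence $\rho(f_w)$ is noncentral and has projectively infinite order, which is exactly the claim for $M_w$. Since $M_w$ was an arbitrary adjacent block, the corollary follows.

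I do not expect a serious obstacle here; the argument is essentially a bookkeeping exercise. The only point that requires a little care is the identification of the boundary curves $c_j$ of $F_v$ used in Proposition \ref{IFR} with the Waldhausen sections $z_{v,w}$ on each torus, together with the sign/orientation conventions making $b_{w,v}$ nonzero. Once these identifications are in place, the passage from ``noncentral, projectively infinite order'' for $\rho(z_{v,w})$ to the same conclusion for $\rho(f_w)$ is immediate from the nonvanishing of the intersection index.
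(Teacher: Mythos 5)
Your proposal is correct and follows essentially the same route as the paper: apply Proposition \ref{IFR} to get $\rho(f_v)$ central and $\rho(z_{v,w})$ noncentral of projectively infinite order, write $f_w = a_{w,v} f_{v,w} + b_{w,v} z_{v,w}$ with $b_{w,v}\neq 0$ via the gluing matrix, and conclude. You merely make explicit the final step (that $pr(\rho(f_w)) = pr(\rho(z_{v,w}))^{b_{w,v}}$ has infinite order) that the paper leaves implicit.
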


                    \begin{proof}
                        By proposition \ref{IFR}, the image of the fiber of $M_v$ is a nontrivial element of central type and 
                        the images of boundaries of the base surface $F_v$ are all noncentral elements with projectively infinite order. 

                        Since the intersection indices are nonzero integers, the gluing map induces a form of matrix $g_{v,w}=$
                        $\left(\begin{smallmatrix}
                            a & b \\
                            c & d
                        \end{smallmatrix}\right)$ with $b\neq 0$. 
                        The fiber $f_w$ of an adjacent blocks can be written as $f_w = a f_v + b z_{v,w}$ in the fundamental group $\pi_1(T_{v,w})$. 
                        Hence, the image $\rho(f_w)$ is a noncentral element with projectively infinite order.  
                    \end{proof}

                \subsection{Abelian representations}
                
                    Let $\rho$ be a $\rep$ of a Seifert block $M_v$ to $\MG$. 
                    If the image of the fundamental group $\pi_1(M_v)$ under $\rho$ is an Abelian group, 
                    the restricted $\rep$ $\rho: \pi_1(M_v) \to \MG$ is called an Abelian $\rep$ of $M_v$. 
                    Since the Abelian $\rep$ factors through the homology group $H_1(M_v;\Q)$, 
                    there is a Waldhausen basis to represent the boundaries of the base surface. 
    
                    Suppose $M_v$ has an Abelian $\rep$ and it has torus boundary $T_{v,w}, w\in W$, where $W$ is the set of all adjacent Seifert blocks of $M_v$. 
                    We choose a Waldhausen basis $\{f_{v,w},z_{v,w}\}_{w \in W}$. 
                    Under gluing matrix $g_{w,v}$, the homological class $f_{w,v}\in H_1(T_{w,v};\Z)$ has an image $g_{w,v}(f_{w,v})$ in $H_1(T_{v,w};\Z)$. 
                    Notice $H_1(T_{v,w};\Z)$ has a basis $f_{v,w}, z_{v,w}$. 
                    Then $f_{w,v}$ can be represented by a linear combination $g_{w,v}(f_{w,v}) = a_{w,v} f_{v,w} + b_{w,v} z_{v,w}$, $a_{w,v},b_{w,v} \in \Z$.  
                    
                    We define a nonzero integer
                    \[b_v = \prod_{w\in W} b_{w,v}. \] 
                    Then we have 
                    \begin{equation}\label{BF}
                        \frac{b_v}{b_{w,v}} g_{w,v}(f_{w,v}) = \frac{b_v a_{w,v}}{b_{w,v}} f_{v,w} + b_v z_{v,w} . 
                    \end{equation}
                    Since $\{z_{v,w}\}$ is in the Waldhausen basis and $M_v$ has the charge $k_v$ which satisfies the equation 
                    \[ k_v = \sum_{w\in W} \frac{a_{w,v}}{b_{w,v}}. \]
                    Sum up on both sides for equation (\ref{BF}), then 
                    \begin{equation}\label{BSF}
                        \sum_{w\in W} \frac{b_v}{b_{w,v}} f_w = b_v k_v f_v. 
                    \end{equation}
                    In convenience, we ignore gluing matrices and inclusion maps. In addition, we denote $f_{w,v}$ by $f_w$ as well as $f_{v,w}$ by $f_v$. 
    
                    Here we introduce the number $b_v$ in order to avoid discussing any $n$-th root of a central element in $\MG$. 
                    In the above equations we can find $b_v/b_{w,v}$ is an integer, so it involves only group multiplication.

                \subsection{Abelian components}

                    In this part, we firstly introduce some definitions in graph theory from Harary \cite{Har69}. 

                    \begin{definition}
                        Let $\Gamma(V,E)$ be a graph, then $\Gamma_0(V_0,E_0)$ is called a subgraph of $\Gamma$ if $V_0$ and $E_0$ are subsets of $V$ and $E$ respectively. 
                    \end{definition}

                    \begin{definition}
                        Let $\Gamma(V,E)$ be a graph. A vertex-induced subgraph $\Gamma_0(V_0,E_0)$, simply called ``an induced subgraph" of $\Gamma$,  
                        induced by the vertex set $V_0$ is a graph with vertex set $V_0$ and edge set $E_0$ consisting of those edges both of whose endpoints are in $V_0$. 
                    \end{definition}
    
                    Let $\rho$ be a $\rep$ of a graph manifold $M$ and let $\Gamma$ be a dual graph of $M$. 
                    Take $\Gamma_0$ be a connected subgraph of $\Gamma$. 
                    Then we use $M_0$ to denote a submanifold consisting of Seifert blocks corresponding to $\Gamma_0$. 
                    If the image of the fundamental group $\pi_1(M_0)$ under the representation $\rho$ is an Abelian group, 
                    then we call the connected subgraph $\Gamma_0$ as an Abelian subgraph and call $M_0$ as an Abelian component. 
                    In addition, if the fibers of Seifert blocks in $M_0$ are all noncentral type under the representation $\rho$, 
                    then we call $M_0$ the noncentral Abelian component and $\Gamma_0$ noncentral Abelian subgraph. 
                 
                    \begin{definition}
                        Let $\rho$ be a $\rep$ of a graph manifold $M$. 
                        The set of all noncentral Abelian subgraphs has a partial order which is the inclusion of subgraphs. 
                        Then we define a maximal noncentral Abelian subgraph which is not included by any Abelian subgraph except itself. 
                        Similarly, we can define a maximal noncentral Abelian component.      
                    \end{definition}

                    The set of noncentral Abelian subgraphs may be an empty set under some representation. 
                    We suppose the set of noncentral Abelian subgraphs is not empty under a representation. 
                    Then there exists at least one maximal noncentral Abelian subgraphs. 
                    There are two propositions about the maximal one. 
                    
                    \begin{proposition}\label{ABC}
                        Let $M_0$ be a maximal noncentral Abelian component under a representation $\rho$. 
                        Then the adjacent Seifert blocks of $M_0$ all have central fibers. 
                    \end{proposition}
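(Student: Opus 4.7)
The plan is to argue by contradiction: assume $M_0$ is a maximal noncentral Abelian component but some adjacent Seifert block $M_w \notin M_0$ has noncentral fiber, and then show that $M_0 \cup M_w$ is still a noncentral Abelian component, contradicting maximality.

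First I would set up notation. Let $M_v$ be the Seifert block of $M_0$ sharing a JSJ torus $T_{v,w}$ with $M_w$. By hypothesis $\rho(\pi_1(M_0))$ is Abelian and every fiber in $M_0$, in particular $\rho(f_v)$, is noncentral. Suppose $\rho(f_w)$ is also noncentral (the case we must rule out). Since $f_w$ generates the center of $\pi_1(M_w)$, the image $\rho(\pi_1(M_w))$ lies in the centralizer $C(\rho(f_w))$, and by Proposition \ref{CP} this centralizer is Abelian because $\rho(f_w)$ is noncentral. Hence $\rho(\pi_1(M_w))$ is Abelian.

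Next I would show that $\rho(\pi_1(M_0))$ and $\rho(\pi_1(M_w))$ commute element-wise. The key observation is that $\rho(f_v)$ lies in $\rho(\pi_1(T_{v,w})) \subset \rho(\pi_1(M_w))$, and since $\rho(\pi_1(M_w))$ is Abelian, every element of $\rho(\pi_1(M_w))$ commutes with $\rho(f_v)$; that is, $\rho(\pi_1(M_w)) \subset C(\rho(f_v))$. On the other hand, $\rho(\pi_1(M_0))$ is Abelian and contains $\rho(f_v)$, so $\rho(\pi_1(M_0)) \subset C(\rho(f_v))$ as well. Since $\rho(f_v)$ is noncentral, Proposition \ref{CP} again tells us that $C(\rho(f_v))$ is Abelian, so both images together generate an Abelian subgroup. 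By van Kampen, $\pi_1(M_0 \cup M_w)$ is generated by $\pi_1(M_0)$ and $\pi_1(M_w)$ (amalgamated along $\pi_1(T_{v,w})$), so $\rho(\pi_1(M_0 \cup M_w))$ is Abelian.

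Finally, since every fiber of a Seifert block in $M_0$ is noncentral by assumption and $\rho(f_w)$ is noncentral by our contradiction hypothesis, all fibers of $M_0 \cup M_w$ are noncentral. Thus the subgraph induced by the vertices of $M_0$ together with $w$ is a noncentral Abelian subgraph strictly containing the subgraph of $M_0$, contradicting maximality. Therefore $\rho(f_w)$ must be central. The main subtlety here is keeping track of which centralizer one works in at each step; once one notices that the shared torus places $\rho(f_v)$ inside the Abelian group $\rho(\pi_1(M_w))$, the rest follows immediately from Proposition \ref{CP}.
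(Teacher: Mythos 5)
Your proof is correct and follows essentially the same route as the paper: use the shared JSJ torus to place one block's fiber inside the other block's image, invoke Proposition \ref{CP} to get a common Abelian centralizer containing both $\rho(\pi_1(M_0))$ and $\rho(\pi_1(M_w))$, and contradict maximality. The only cosmetic difference is that you centralize around $\rho(f_v)$ where the paper centralizes around $\rho(f_w)$; the two are interchangeable.
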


                    \begin{proof}
                        Suppose $M_w$ is an adjacent Seifert block of $M_0$ with a noncentral fiber $f_w$. 
                        Hence, the fundamental group $\pi_1(M_w)$ is an Abelian group. 
                        Since $\rho(f_w)$ commutes with the images of all fibers of Seifert blocks in $M_0$, 
                        all elements of $\pi_1(M_w)$ commute with all elements of $\pi_1(M_0)$. 
                        The fundamental group of a submanifold consisting of $M_0$ and $M_w$ is an Abelian group.  
                        This result contradicts with the definition of maximal noncentral Abelian component. 
                        Hence, the fiber of $M_w$ is of central type. 
                    \end{proof}

                    \begin{proposition}\label{ISG}
                        Let $M$ be a closed graph manifold with dual graph $\Gamma (V,E)$. 
                        If $\Gamma_0(V_0,E_0)$ is a maximal noncentral Abelian subgraph under a representation $\rho$, 
                        then there is a characteristic finite cover $p: M' \to M$ such that 
                        the subgraph induced by vertex set $V'_0$ is a maximal noncentral Abelian subgraph under the representation $p\circ \rho$, 
                        where $V'_0$ is the preimage of $V_0$. 
                    \end{proposition}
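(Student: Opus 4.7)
The plan is to construct a JSJ $n$-characteristic finite cover $p\colon M'\to M$ for a carefully chosen integer $n$, and then verify that $M_0'=p^{-1}(M_0)$ realises a maximal noncentral Abelian component of $M'$ under the pullback $\rho\circ p_*$. The key structural input is Proposition \ref{ABC}: every Seifert block of $M$ adjacent to $M_0$ has central fiber under $\rho$, and since centrality of an element in $\MG$ survives passage to powers, the adjacency obstruction to maximality will be automatic once the cover is built.

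The delicate choice is the degree $n$, driven by the need to keep the fibers inside $M_0$ noncentral after lifting. For each $v\in V_0$, the fiber of a component of $p^{-1}(M_v)$ is a power $f_v^{d_v}$ of $f_v$, with $d_v$ a divisor of $n$ determined by the JSJ $n$-characteristic restriction on the adjacent tori. If $pr(\rho(f_v))$ has infinite order, every such power is noncentral; if $pr(\rho(f_v))$ has finite order $m_v\geq 2$, then $\rho(f_v)^{d_v}$ is noncentral exactly when $m_v\nmid d_v$. I would therefore take $n$ coprime to the finite product of all such $m_v$ for $v\in V_0$. With this choice, every Seifert block of $M_0'$ has noncentral fiber, while Abelianness of $\rho\circ p_*$ on $\pi_1(M_0')$ follows for free, since $\pi_1(M_0')$ embeds in $\pi_1(M_0)$ and the latter has Abelian image under $\rho$.

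To force the induced subgraph on $V_0'$ to be connected, I would further refine the cover using residual finiteness of $\pi_1(M)$: choose a characteristic finite-index subgroup whose intersection with each JSJ torus subgroup is the $n$-characteristic subgroup and whose deck quotient is surjected onto by $\pi_1(M_0)$. By the standard correspondence between double cosets and components of the preimage, this gives $p^{-1}(M_0)$ connected. Maximality of the resulting subgraph $\Gamma_0'$ then follows immediately: any vertex of $\Gamma'$ adjacent to $V_0'$ covers a vertex of $V\setminus V_0$ adjacent to $V_0$, whose fiber is central under $\rho$ by Proposition \ref{ABC}; the lifted fiber is then a power of a central element and still central, forbidding any noncentral Abelian extension of $\Gamma_0'$.

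The main obstacle I anticipate is engineering a single cover in which the connectedness condition on $p^{-1}(M_0)$ coexists with the JSJ $n$-characteristic property that controls the degrees $d_v$. These two requirements constrain the defining normal subgroup of $\pi_1(M)$ in different directions, and combining them will require a careful use of averaging over $\mathrm{Aut}(\pi_1(M))$ together with the construction of a finite quotient of $\pi_1(M)$ that simultaneously records the JSJ torus data at level $n$ and enough of the image of $\pi_1(M_0)$ to guarantee surjectivity onto the deck group.
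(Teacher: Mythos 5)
Your proposal misses the actual difficulty of this proposition and consequently contains a false step. The point of the statement is that $\Gamma_0(V_0,E_0)$ need not be an \emph{induced} subgraph: the subgraph $\hat\Gamma_0(V_0,\hat E_0)$ induced by $V_0$ may contain extra edges, and the corresponding submanifold $\hat M_0$ has fundamental group an HNN extension of $\pi_1(M_0)$ with one stable letter $t_i$ per extra torus. The conclusion concerns the subgraph induced by $V'_0$, whose corresponding submanifold covers $\hat M_0$, not $M_0$. So your claim that ``Abelianness of $\rho\circ p_*$ on $\pi_1(M_0')$ follows for free, since $\pi_1(M_0')$ embeds in $\pi_1(M_0)$'' is wrong for the object the proposition is about: it embeds only in $\pi_1(\hat M_0)$, whose image under $\rho$ need not be Abelian. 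Indeed, since conjugation preserves $|\mathrm{Tr}|$ in $\PSL2R$, each stable letter satisfies either $pr(\rho(t_i\alpha t_i^{-1}))=pr(\rho(\alpha))$ for all $\alpha\in\pi_1(M_0)$, or $pr(\rho(t_i\alpha t_i^{-1}))=pr(\rho(\alpha))^{-1}$ for all such $\alpha$; the inverting case genuinely occurs and destroys Abelianness. Neither of your devices (degree $n$ coprime to the orders $m_v$, residual finiteness to force connectedness of the preimage) does anything to remove these inverting loops.

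The paper's proof is built entirely around this point: one cuts exactly the inverting edges of $\hat E_0\setminus E_0$, takes two copies of the broken graph, and reglues crosswise to obtain a connected double cover of $\Gamma$. This cover has degree one on every Seifert block and every JSJ torus, so it is JSJ characteristic and leaves all fibers unchanged --- which also makes your concern about fibers $f_v^{d_v}$ becoming central moot. In this double cover every loop crossing a formerly inverting edge does so an even number of times, so only elements such as $\rho(t_j^2)$ appear, and these commute with everything in $\rho(\pi_1(M_0))$; maximality of the resulting induced subgraph then follows from Proposition \ref{ABC} essentially as you argue. You would need to add this ingredient, or an equivalent mechanism for handling the extra edges, for your argument to go through.
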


                    \begin{proof}

                        If $\Gamma_0$ is an induced subgraph, then it is a trivial case and the conclusion holds. 
                        Suppose $\Gamma_0(V_0,E_0)$ is not an induced subgraph, 
                        and it has $m$ fewer edges than a vertex-induced subgraph $\hat{\Gamma}_0(V_0,\hat{E}_0)$ induced by $V_0$. 
                        We denote $m$ edges by $e_i$ and the corresponding torus by $T_i$. 
                        For each $T_i$, there is a gluing matrix $g_i$.   
                        Let $M_0$ denote the corresponding submanifold of $\Gamma_0$ and $\hat{M}_0$ denote the corresponding submanifold of $\hat{\Gamma}_0$. 
                        If $\pi_1(M_0)$ has a presentation $\langle S|R \rangle$. 
                        Then $\pi_1(\hat{M}_0)$ is an HNN extension of $\pi_1(M_0)$. 
                        It has a presentation 
                        \begin{align*}
                             \langle S,t_i,1\leq i \leq m \ | \ R,\ t_if_it_i^{-1} = g_i(f_i), \ t_iz_it_i^{-1} = g_i(z_i),1\leq i \leq m  \rangle, 
                        \end{align*}
                        where $f_i,z_i$ are the generators of $\pi_1(T_i)$. 

                        Consider the projection $pr:\MG \to \PSL2R$. 
                        Let $\alpha$ be an element of $\pi_1(M_0)$. 
                        Notice 
                        \[ |\text{Tr}(pr(\rho(t_i\alpha t_i^{-1})))|=|\text{Tr}(pr(\rho(\alpha)))|. \]
                        Hence, we have two possible cases, 
                        \[ pr(\rho(t_i\alpha t_i^{-1})) = pr(\rho(\alpha)) \tag{$\star$} \] 
                        or
                        \[ pr(\rho(t_i\alpha t_i^{-1})) = pr(\rho(\alpha))^{-1}. \tag{$\star \star$} \]
                         
                        Since $\rho(\pi_1(M_0))$ is an Abelian group, by proposition \ref{CP}, 
                        any element commuting with $\rho(\alpha)$ will commute with all elements in $\rho(\pi_1(M_0))$. 
                        Hence, for each $t_i$, it either satisfies the first relation $(\star)$ for all elements in $\pi_1(M_0)$ 
                        or satisfies the second relation $(\star \star)$ for all elements in $\pi_1(M_0)$. 

                        Suppose there are $n$ edges $\{ e_j,1\leq j\leq n \}$ corresponding to $\{ t_j,1\leq j\leq n\}$ 
                        where every $t_j$ satisfies the second relation $(\star \star)$. 
                        Notice each $\rho(t^2_j)$ commutes with every element in $\rho(\pi_1(M_0))$. 
                        Then we can construct a double cover satisfying the conclusion. 

                        We state a construction of the cover on the level of graph, hence it is a JSJ characteristic finite cover. 
                        Cut $e_j$ in $\Gamma$ to get two ``half edges'' $e^+_j,e^-_j$ for each $j$ and take a copy of the broken graph. 
                        Then we glue $e^+_j$ to the half edge $\bar{e}^-_j$ and glue $e^-_j$ to the half edge $\bar{e}^+_j$ for each $j$, 
                        where $\bar{e}^+_j,\bar{e}^-_j$ are on the copy of the broken graph. 
                        By this construction, we get a connected double cover $\Gamma'$ of the graph $\Gamma$. 
                        The graph manifold $M'$ dual to $\Gamma'$ is hence the double cover of $M$. 
                        We denote this covering by $p:M' \to M$. 
                        Then the subgraph $\Gamma'_0$ induced by the vertex set $V'_0$ is a double cover of $\hat{\Gamma}_0$. 
                        The corresponding submanifold of $\Gamma'_0$ is a noncentral Abelian component under the representation $p\circ \rho$. 
                        According to our construction, the fiber of Seifert blocks adjacent to $\Gamma'_0$ are all central types. 
                        Hence, the induced subgraph $\Gamma'_0$ is a maximal noncentral Abelian subgraph.   
                    \end{proof}

                    Let a Seifert block $M_v$ has a fiber with projectively infinite order. 
                    We set $M_v$ contained in a maximal noncentral Abelian components $M_0$ and the corresponding subgraph $\Gamma_0$ is an induced subgraph. 
                    Let $W$ denote the set of all adjacent Seifert blocks of $M_v$. 
                    Then we use $W_A$ to denote the subset of $W$ which contains adjacent Seifert blocks of $M_v$ lying in the maximal noncentral Abelian components $M_0$  
                    and use $W_C$ to denote the rest of elements in $W$ which are not in $M_0$.  
                    Then the equation (\ref{BSF}) can be written in other form, i.e. 
                    \begin{equation}\label{AO}
                        b_v k_v f_v - \sum_{w\in W_A} \frac{b_v}{b_{w,v}} f_w = \sum_{w'\in W_C} \frac{b_v}{b_{w',v}} f_{w'}. 
                    \end{equation}

                    The fiber $f_{w'}$ commute with $f_v$. 
                    Moreover, the right-hand of equation (\ref{AO}) has all fibers with images in the center of $\MG$ under the representation $\rho$ by proposition \ref{ABC}.

                    If a maximal noncentral Abelian component has an induced corresponding subgraph under the $\rep$ $\rho$, then we give a following definition.  
                    \begin{definition}
                        Let $M_0$ be a maximal noncentral Abelian component with an induced corresponding subgraph. 
                        If $M_0$ consists of $n$ Seifert blocks,   
                        then for each Seifert block $M_{0,i}$ of $M_0$, there is an equation (\ref{AO}) associated to $M_{0,i}$. 
                        Then we have $n$ equations for the maximal Abelian component $M_0$. 
                        Denote the equations in form of matrices 
                        \begin{equation*}
                             \bm{e}_0 \bm{f_A} = \bm{f_C}. 
                        \end{equation*}
                        We call $\bm{e}_0$ the associated matrix to the maximal Abelian component $M_0$ under the $\rep$ $\rho$. 
                    \end{definition}
    
                    Let $M_0$ be a maximal noncentral Abelian component with an induced corresponding subgraph. 
                    If $M_0$ consists of $n$ Seifert blocks with fiber denoted by $f_i,1\leq i \leq n$,     
                    we can write down the associated matrix to the maximal noncentral Abelian component $M_0$.  
                    For the $i$-th Seifert block in $M_0$, let $W_C^i$ denote the adjacent Seifert blocks not in $M_0$ ($W_C^i$ may be empty).  
                    Then the matrix equation is as the following: 
                    \begin{equation}
                        \renewcommand\arraystretch{2}
                        \setlength{\arraycolsep}{8pt}
                        \left(
                        \begin{array}{cccc}
                           b_1k_1 & \beta_{12} & \cdots & \beta_{1,n} \\
                           \beta_{21} & b_2k_2 & \cdots & \beta_{2,n} \\
                           \vdots & \vdots & \ddots & \vdots \\
                           \beta_{n,1} & \beta_{n,2} & \cdots & b_nk_n
                        \end{array}
                        \right)
                        \left(
                        \begin{array}{c}
                           f_1 \\ f_2 \\ \vdots \\ f_n
                        \end{array}
                        \right) = 
                        \left(
                        \begin{array}{c}
                           \sum\limits_{w'\in W_C^1} \frac{b_1}{b_{w',1}} f_{w'} \\
                           \sum\limits_{w'\in W_C^2} \frac{b_2}{b_{w',2}} f_{w'} \\
                           \vdots \\
                           \sum\limits_{w'\in W_C^n} \frac{b_n}{b_{w',n}} f_{w'}
                        \end{array}
                        \right),
                    \end{equation}
                    where $\beta_{ij} (i\neq j)$ is $-\frac{b_i}{b_{j,i}}$
                    if the $i$-th Seifert block and the $j$-th Seifert block are adjacent. 
                    Otherwise, $\beta_{ij}$ is 0. 

                    Since the image of $\pi_1(M_0)$ is an Abelian group, we write the group multiplication in the form of the addition. 
                    We can find that the entries of $\bm{e}_0$ are all integers and the diagonal elements of $\bm{e}_0$ are in the form of $b_ik_i$. 
                    Moreover, all elements of the right-hand column are of central type.    

        \section{Virtually nonexistence of faithful representations in closed case}\label{Proof}

            In this section we prove the Theorem \ref{NFR}. 
            In fact, we need to prove that a strictly diagonally dominant closed graph manifold virtually has no vertex faithful $\rep$s to the Seifert motion group. 
        
            \begin{proof}[Proof of Theorem \ref{NFR}]
                By lemma \ref{KLL}, there is a finite cover $M_1$ of the $\gmd$ $M$ 
                such that each Seifert block homeomorphic to a trivial circle bundle over a compact surface with genus greater than 1. 
                Then we can find a JSJ characteristic finite cover $M'$ of $M$ factors through $M_1$ 
                and every Seifert block of $M'$ is also a trivial circle bundle over a compact surface with genus greater than 1. 
                
                Suppose there exists a vertex faithful representation $\rho': \pi_1(M') \to \MG$ which is faithful at vertex $v'$. 
                By corollary \ref{ABF}, the adjacent Seifert blocks of $M'_{v'}$ all have fibers of noncentral type with images of projectively infinite order. 
                Hence, each adjacent block has an Abelian $\rep$ under $\rho'$.    

                Let $M'_0$ be a maximal noncentral Abelian component of $M'$ which contains at least one adjacent block of $M'_{v'}$. 
                Then by proposition \ref{ISG}, there is JSJ characteristic double cover $M''$ 
                such that the preimage $M''_0$ is a maximal noncentral Abelian component whose corresponding subgraph is an induced subgraph. 
                Let $\rho'': \pi_1(M'') \to \MG $ be the representation which is faithful at vertex $v''$ who is one component of the preimage of $v'$.  

                The graph manifold $M''$ is a JSJ characteristic double cover of $M'$, and hence it is a JSJ characteristic finite cover of $M$. 
                Since $M$ is strictly diagonally dominant, by proposition \ref{SDD}, $M''$ is also strictly diagonally dominant. 
                According to the definition \ref{DSB} of a strictly diagonally dominant $\gmd$, each Seifert block of $M''$ is strictly diagonally dominant. 
                Hence, $M''_0$ also satisfies the condition of strictly diagonally dominance. 

                Let $M''_{u''}$ be a Seifert block in $M''_0$.  
                Then $\rho''$ restricted to $\pi_1(M''_{u''})$ is Abelian. 
                We can write an equation (\ref{AO}) for $M''_{u''}$, 
                \[
                    b_{u''} k_{u''} f_{u''} - \sum_{w''_A \in W''_A} \frac{b_{u''}}{b_{w''_A,u''}} f_{w''_A} = \sum_{ w''_C \in W''_C} \frac{b_{u''}}{b_{w''_C,u''}} f_{w''_C},  
                \]
                Here $W''_A$ denotes the set of adjacent blocks of $M''_{u''}$ which lie in $M''_0$ 
                and $W''_C$ denotes the set of other adjacent blocks of $M''_{u''}$ which are not in $M''_0$. 

                Since $M''_{u''}$ is strictly diagonally dominant, we have an inequality
                \[ 
                    |b_{u''} k_{u''}| > \sum_{w''_A \in W''_A} \frac{|b_{u''}|}{|b_{w''_A,u''}|} + \sum_{ w''_C \in W''_C} \frac{|b_{u''}|}{|b_{w''_C,u''}|} 
                                    \geq \sum_{w''_A \in W''_A} \frac{|b_{u''}|}{|b_{w''_A,u''}|}. 
                \]
                Notice that the left-hand is the absolute value of a diagonal element $b_{u''} k_{u''}$ in the associated matrix $\bm{e''}_0$ to the submanifold $M''_0$. 
                The right-hand is the sum of absolute value of other entries which lie in the same row with $b_{u''} k_{u''}$. 
                For each row of the associated matrix $\bm{e''}_0$, the corresponding inequality holds.  
                Then the associated matrix $\bm{e''}_0$ to $M''_0$ is strictly diagonally dominant. Hence, it is invertible. 

                Consider the equation 
                \[ \bm{e''}_0 \bm{f''_A} = \bm{f''_C}. \]
                We have 
                \[
                    (\bm{e''}_0)^*\bm{e''}_0 \bm{f''_A} = (\bm{e''}_0)^*\bm{f''_C}, 
                \] 
                \[
                    \text{det}(\bm{e''}_0)\bm{f''_A} = (\bm{e''}_0)^*\bm{f''_C}, 
                \]
                where $(\bm{e''}_0)^*$ is the adjoint matrix of $\bm{e''}_0$ and $\text{det}(\bm{e''}_0)$ is the determinant of the matrix $\bm{e''}_0$. 
                
                The entries of the matrix $\bm{e''}_0$ are all integers, so are the entries of the adjoint matrix $(\bm{e''}_0)^*$. 
                The images of fibers are all in an Abelian group, the equation has solutions. 
                By remark \ref{ROE}, the elements of any solution all have projectively finite order. 
                This conclusion contradicts the result that the fibers of adjacent blocks of $M''_{v''}$ are noncentral elements with projectively infinite order. 
                So, the vertex faithful $\rep$ $\rho''$ does not exist. 
            
                As for a general finite cover $M_2$, there is a JSJ characteristic finite cover $M_3$ which factors through $M_2$. 
                Then $M_3$ is strictly diagonally dominant. 
                There is no vertex faithful $\rep$s on $M_3$. 
                Hence, there is no vertex faithful $\rep$s on $M_2$. 
                So, a closed strictly diagonally dominant $\gmd$ $M$ virtually has no vertex faithful $\rep$s.
            \end{proof} 

            As a deduced result, a closed strictly diagonally dominant $\gmd$ $M$ virtually has no faithful $\rep$s to the Seifert motion group. 
        
            In another point of view, the proof of Theorem \ref{NFR} is extending a $\rep$ from a Seifert block to the whole $\gmd$. 
            So, we can state in a form of a corollary. 
            \begin{corollary}
                Let $M$ be a strictly diagonally dominant $\gmd$. 
                There is a $\rep$ $\rho$ on a Seifert block $M_v$ is faithful to the Seifert motion group.  
                Then the $\rep$ $\rho$ virtually cannot be extended to the whole $\gmd$ $M$. 
            \end{corollary}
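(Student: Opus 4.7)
The plan is to deduce this corollary as a direct application of Theorem \ref{NFR}, proceeding by contradiction. First I would fix the natural interpretation of what it means to \emph{virtually extend} $\rho$: there exist a finite cover $p: M' \to M$, a component $M'_{v'}$ of $p^{-1}(M_v)$, and a representation $\tilde{\rho}: \pi_1(M') \to \MG$ whose restriction to the image of $\pi_1(M'_{v'})$ inside $\pi_1(M')$ agrees with the pullback $\rho \circ p_*$.

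Suppose for contradiction that such a virtual extension exists. The covering $p: M'_{v'} \to M_v$ is finite, so the induced homomorphism $p_*: \pi_1(M'_{v'}) \to \pi_1(M_v)$ embeds $\pi_1(M'_{v'})$ as a finite index subgroup. Composing this injection with the faithful $\rho$ yields an injective homomorphism, so the restriction of $\tilde{\rho}$ to $\pi_1(M'_{v'})$ is injective. Therefore $\tilde{\rho}$ is vertex faithful at $v'$ in the sense of Definition \ref{VFR}.

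Next I would invoke Theorem \ref{NFR} applied to $M$: as its proof establishes the stronger statement that every strictly diagonally dominant closed graph manifold virtually admits no vertex faithful representation, any finite cover $M'$ of $M$ together with any representation $\pi_1(M') \to \MG$ must fail to be faithful on every Seifert block of $M'$. This directly contradicts the vertex faithfulness of $\tilde{\rho}$ at $v'$, completing the argument.

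There is no substantive technical obstacle here, since the corollary is essentially a reformulation of Theorem \ref{NFR}; the only points requiring attention are fixing the notion of virtual extension and observing that restriction along a finite cover preserves injectivity. If anything, the mildly delicate step is ensuring that the hypothesis "$\rho$ is faithful on $M_v$" genuinely transfers to vertex faithfulness on the cover, which is immediate from the finiteness of $p$.
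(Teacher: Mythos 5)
Your proposal is correct and matches the paper's intent exactly: the paper offers no separate proof, presenting the corollary as a restatement of Theorem \ref{NFR}, and your argument (virtual extension $\Rightarrow$ vertex faithful representation on a finite cover via injectivity of $p_*$ $\Rightarrow$ contradiction with the theorem) is precisely the reasoning being left implicit. The only caveat worth noting is that Theorem \ref{NFR} concerns \emph{closed} graph manifolds, so the corollary must be read with $M$ closed (as Section \ref{EB} shows the bounded case behaves oppositely), but that is an imprecision in the statement rather than in your proof.
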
        
        
        \section{Existence of a vertex faithful representation in bounded case}\label{EB}

            If a $\gmd$ has torus boundaries, then there is a vertex faithful $\rep$ on it. 
            In other word, the faithful $\rep$ of a Seifert block can be extended to a $\rep$ on the whole $\gmd$. 
            The $\rep$ is not necessarily faithful on the whole $\gmd$. 
            In the following we give a construction for one class of $\gmd$s with nonempty boundary. 
    
            \begin{theorem}
                Let $M$ be a $\gmd$ whose dual graph $\Gamma$ is a tree 
                and Each Seifert block dual to end vertex of the tree has at least one torus boundary. 
                If one Seifert block has a faithful $\rep$ to the Seifert motion group, 
                then the $\rep$ can be extended to the whole $\gmd$ $M$. 
            \end{theorem}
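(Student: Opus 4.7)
The plan is to root the tree $\Gamma$ at the vertex $v_0$ corresponding to the Seifert block $M_{v_0}$ that carries the given faithful representation $\rho_0\colon\pi_1(M_{v_0})\to\MG$, and then to extend $\rho_0$ by induction on the combinatorial distance from $v_0$. At each stage one defines a representation $\rho_w\colon\pi_1(M_w)\to\MG$ on a new block so that $\rho_w|_{\pi_1(T_{w,v})}$ agrees with $\rho_v|_{\pi_1(T_{v,w})}$ through the gluing map $g_{v,w}$, where $v$ is the parent of $w$. Once the $\rho_w$'s are defined compatibly on all blocks, the Seifert--van Kampen theorem applied to the tree-of-groups decomposition of $\pi_1(M)$ assembles them into a single $\rho\colon\pi_1(M)\to\MG$, which restricts to $\rho_0$ on $M_{v_0}$ and is therefore faithful there.

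For the inductive step, assume $\rho_v$ has been defined on the parent $M_v$. Through $g_{v,w}$ the subgroup $\pi_1(T_{w,v})\subset\pi_1(M_w)$ inherits prescribed images; in particular $F_w:=\rho_w(f_w)$ is forced to equal $\rho_v(g_{v,w}^{-1}(f_w))$, and in a Waldhausen basis the coefficient of $z_{v,w}$ in $g_{v,w}^{-1}(f_w)$ is $\pm b_{v,w}\neq 0$. For a block adjacent to $v_0$ this makes $F_w$ noncentral by Proposition \ref{IFR}, since $\rho_{v_0}(z_{v_0,w})$ is noncentral with projectively infinite order; Proposition \ref{CP} then forces $\rho_w(\pi_1(M_w))$ into the Abelian centralizer $C(F_w)$. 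Accordingly we define every $\rho_w$ (for $w\neq v_0$) as an Abelian representation, factoring through $H_1(M_w;\Z)$ and taking values in an Abelian Lie subgroup of $\MG$ containing the prescribed data.

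Using the presentation $\pi_1(M_w)=\langle a_i,b_i,c_j \mid \prod_i[a_i,b_i]=\prod_j c_j\rangle\times\langle f_w\rangle$ recalled in Proposition \ref{IFR}, the only relation surviving in the Abelian quotient is $\sum_j \rho_w(c_j)=0$. The image $\rho_w(c_{w,v})$ of the boundary circle shared with the parent is prescribed, while the images of the surface generators $a_i,b_i$ play no role in this relation and may be chosen arbitrarily in $C(F_w)$. Every other boundary circle $c_j$, whether it corresponds to a child of $w$ in the tree or to an actual torus component of $\partial M$, has an image that may be freely chosen in the connected Abelian Lie subgroup $C(F_w)$. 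Provided at least one such free $c_j$ exists, we set it equal to $-\rho_w(c_{w,v})$ and the rest to the identity, satisfying the relation and completing the inductive step.

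The main obstacle, and the point at which the hypothesis is used, is precisely the availability of this free boundary circle. If $w$ is an interior vertex of $\Gamma$, it has at least one child in addition to its parent, supplying an unconstrained $c_j$. If $w$ is an end vertex, then among its JSJ tori only $c_{w,v}$ is available, so one genuinely needs a torus boundary of $M$ to supply the free circle; the hypothesis on end blocks guarantees exactly this. A secondary check is that the freely chosen images propagate consistently into the next layer of the induction, but since any new $F_{w_j}$ lies in $C(F_w)\subset C(F_{w_j})$, the Abelian ambient group only grows, and the process runs undisturbed to the leaves.
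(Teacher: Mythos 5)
Your proof is correct and follows essentially the same route as the paper's: root the tree at the faithfully represented block, propagate outward by induction on distance making every other block Abelian inside a connected Abelian subgroup containing the data prescribed by the gluing, and use the spare boundary circle (a child edge at interior vertices, a component of $\partial M$ at leaves) to absorb the single surviving relation $\sum_j \rho_w(c_j)=0$. Your write-up is if anything more explicit than the paper's about the Seifert--van Kampen assembly and about exactly which relation must be satisfied at each block.
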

    
            \begin{proof}
                Suppose there is a faithful $\rep$ $\rho_0$ on a Seifert block $M_v$ of the $\gmd$ $M$.  
                Then there is a direction on $\Gamma$ starting from $v$ to all the ends. 

                Let the $\rep$ denoted by $\rho: \pi_1(M) \to \MG$. Let $W$ denote all adjacent blocks of $M_v$. 
                The images of a homological basis $\{f_{w,v},z_{w,v}\}$ of each torus $T_{w,v}$ are determined by a gluing matrix $A_{v,w}$. 
                Then we set the image of $\pi_1(M_w)$ to be Abelian. 
                So the base surface boundaries of $M_w$ have all images in a group $C(\rho(f_w))=\{ h \in \MG \ |\  \rho(f_w) h =h \rho(f_w) \}$.  
                We choose elements from $C(\rho(f_w))$ such that they form a Waldhausen basis for $M_w$. 
                Hence, each adjacent block of $M_v$ has an Abelian $\rep$.   

                Suppose the $\rep$ $\rho$ has been extended to all Seifert blocks with distance less than $n$ to $M_v$. 
                When $n=2$, we have done it in the last paragraph. 
                If a Seifert block $M_u$ with distance $n$ to $M_v$, 
                it is adjacent to a unique Seifert block $M_t$ with distance $n-1$ to $M_v$. 
                The basis $\{ f_{u,t}, z_{u,t} \}$ of the torus $T_{u,t}$ is determined by the gluing matrix $g_{t,u}$ from $T_{t,u}$ to $T_{u,t}$, 
                \[
                    \begin{pmatrix}
                        f_{u,t} \\
                        z_{u,t}
                    \end{pmatrix} =   
                    g_{t,u}
                    \begin{pmatrix}
                        f_{t,u} \\
                        z_{t,u}
                    \end{pmatrix} .  
                \]
                
                Hence, the image of the fiber $f_u$ is determined by the inclusion map. 
                
                Since the graph is a tree with all end points corresponding to Seifert manifolds meeting the boundary of $M$. 
                $M_u,$ has at least one torus boundary is free. 
                Then we can choose elements in the group $C(\rho(f_{u,t}))\cap C(\rho(z_{u,t}))$ to get a Waldhausen basis of $M_u$. 
                Hence, the $\rep$ $\rho$ restricted on $\pi_1(M_u)$ is Abelian. 

                By induction, we can extend the $\rep$ $\rho_0$ to $\rho$ on the whole $\gmd$ such that $\rho$ restricted on $\pi_1(M_v)$ is faithful.
            \end{proof}

    \end{spacing}
 
    \bibliographystyle{alpha}
        \bibliography{Repofgmfd}

\end{document}